\newcommand{\IR}{{\mathbb R}}
\newcommand{\IC}{{\mathbb C}}
\newcommand{\IP}{{\mathbb P}}
\newcommand{\IZ}{{\mathbb Z}}
\newcommand{\IN}{{\mathbb N}}
\newcommand{\IQ}{{\mathbb Q}}
\newcommand{\IH}{{\mathbb H}}
\newcommand{\U}{\mathrm{U}}
\newcommand{\SL}{\mathrm{SL}}
\newcommand{\zz}{\mathfrak{z}}
\theoremstyle{plain}
\newtheorem{thm}{Theorem}[section]
\newtheorem{cor}[thm]{Corollary}
\newtheorem{lem}[thm]{Lemma}
\theoremstyle{definition}
\newtheorem*{rem}{Remark}
\numberwithin{equation}{section}
\newcommand{\pmat}[1]{\left( \smallmatrix #1 \endsmallmatrix \right)}
\def\lp{\left(}
\def\rp{\right)}
\def\a{\alpha}
\def\b{\beta}
\def\d{\delta}
\def\l{\lambda}
\def\t{\tau}
\def\del{  \partial}
\def\ddd{\mathrm{d}}
\newcommand{\re}{{\rm Re}}
\def\wh{\widehat}
\def\bar{\overline}
\newcommand{\andd}{\quad \mbox{ and } \quad}
\newcommand{\where}{\quad \mbox{ where }}
\def\slashchar#1{\setbox0=\hbox{$#1$}           
   \dimen0=\wd0                                 
   \setbox1=\hbox{/} \dimen1=\wd1               
   \ifdim\dimen0>\dimen1                        
      \rlap{\hbox to \dimen0{\hfil/\hfil}}      
      #1                                        
   \else                                        
      \rlap{\hbox to \dimen1{\hfil$#1$\hfil}}   
      /                                         
   \fi}                                        %
\setlist[itemize]{noitemsep, topsep=0pt}
\newcounter{exercise}
\renewcommand{\theexercise}{\thesection.\arabic{exercise}}
\newmdenv[%
    style=exercise,
    settings={\global\refstepcounter{exercise}},
    frametitlefont={\bfseries Exercise~\theexercise\quad},
]{exercise}
\newmdenv[%
    style=exercise,
    frametitlefont={\bfseries Exercise~\quad},
]{exercise*}
\newmdenv[%
    backgroundcolor=gray!8,
    linecolor=violet,
    outerlinewidth=1pt,
    roundcorner=3mm,
    skipabove=\baselineskip,
    skipbelow=\baselineskip,
]{boxes}
\newcommand{\vast}{\bBigg@{2}}
\newcommand{\Vast}{\bBigg@{5}}
\renewcommand{\pmod}[1]{\  \,  \lp  \mathrm{mod} \,  #1 \rp}
\title[An exact formula]{An exact formula for $\mathbf{U (3)}$ Vafa-Witten invariants on $\IP^2$}
\author{Kathrin Bringmann}
\author{Caner Nazaroglu}
\address{Mathematical Institute, University of Cologne \\
Weyertal 86-90, 50931 Cologne, Germany}
\email{kbringma@math.uni-koeln.de}
\address{Mathematical Institute, University of Cologne \\
Weyertal 86-90,, 50931 Cologne, Germany}
\email{cnazarog@math.uni-koeln.de}
\keywords{}
\begin{document}

\begin{abstract}
Topologically twisted $\mathcal{N} = 4$ super Yang-Mills theory has a partition function that counts Euler numbers of instanton moduli spaces. On the manifold $\IP^2$ and with gauge group $\U (3)$ this partition function has a holomorphic anomaly which makes it a mock modular form of depth two. We employ the Circle Method to find a Rademacher expansion for the Fourier coefficients of this partition function. This is the first example of the use of Circle Method for a mock modular form of a higher depth.
\end{abstract}

\maketitle

\section{Introduction and statement of results}
Studying and understanding the structure of instanton moduli spaces is an interesting and important problem for both physics and mathematics. Although such spaces are quite intricate in general, one can go quite a long way in computing certain topological and analytic invariants. From a physical point of view, such invariants can be probed with topological field and string theories. This allows one to restrict attention to simpler and more tractable sectors of the original theory for which these moduli spaces are relevant. The concept of duality in physics then can lead to interesting mathematical relations between such invariants.

The particular example we focus on in this paper is the topological $\mathcal{N} = 4$ super Yang-Mills theory on a complex surface and with gauge group $\U (N)$ studied by Vafa and Witten \cite{Vafa:1994tf}. We call this topologically twisted theory {\it Vafa-Witten Theory}. 
Separating $\mathcal{Q}$-exact terms, the action grades configurations only by their instanton number. In this way, the partition function of Vafa-Witten theory contains a holomorphic $q$-series that counts (weighted) Euler numbers for instanton moduli spaces, which we denote by $f_{N,\mu} (\t)$, where $\mu$ is the magnetic t'Hooft flux and $\t \in \IH$, the complex upper half-plane, denotes the complexified gauge coupling.\footnote{We use the notation $f_{N,\mu} (\t)$ for the generating function of Vafa-Witten invariants and define the related function $h_{N,\mu}(\t)$ through $f_{N,\mu} (\t) \coloneqq \frac{h_{N,\mu} (\t)}{\eta (\t)^{3N}}$, where $\eta(\tau)$ is Dedekind's eta function. This notation is consistent with that of \cite{Bringmann:2010sd} but differs from that of \cite{Manschot:2017xcr}, where $h_{N,\mu} (\t) = \frac{f_{N,\mu} (\t)}{\eta (\t)^{3N}} $ is used to denote the generating function of Vafa-Witten invariants.
} The S-duality of $\mathcal{N}= 4$ Yang-Mills theory \cite{Montonen:1977sn, Osborn:1979tq, Witten:1978mh} then implies that such partition functions should be modular invariant yielding a nontrivial relation between the Euler numbers. In \cite{Vafa:1994tf}, this reasoning is applied as a test for the proposed duality by studying the partition functions for complex surfaces such as $K3$, $\mathrm{ALE}$ spaces, and $\IP^2$.

The relevant partition function for $\IP^2$ and with gauge group $\U (2)$ follows from the works of \cite{klyachko1991, yoshioka1994betti, yoshioka1995betti} and is expressed in terms of 
\begin{equation*}\label{eq:f2defn}
f_{2, \a} (\t) \coloneqq \frac{ h_{2,\a} (\t)}{\eta (\t)^6}, 
\qquad
\a \in \{ 0, 1 \},
\end{equation*}
where  
$\eta(\tau) \coloneqq q^{\frac{1}{24}}\prod_{n=1}^\infty(1-q^n)$ is \textit{Dedekind's eta-function}, $q := e^{2 \pi i \t}$,
and
\begin{equation*}
h_{2,\a} (\t) \coloneqq 3 h_\a (\t) \quad \mbox{where } \ 
h_\a (\t) \coloneqq \sum_{n=0}^\infty H(4n + 3 \a) \, q^{n + \frac{3\a}{4}},
\qquad
\a \in \{ 0, 1 \},
\end{equation*}
with $H(N)$ denoting the Hurwitz class numbers. The first few Fourier coefficients of   $ h_\a $ are given by
\begin{equation}\label{eq:h0Fourier}
h_0 (\t) = 
-\frac{1}{12} + \frac{1}{2} q + q^2 + \frac{4}{3} q^3 + \frac{3}{2} q^4 + 2 q^5 + 2 q^6 + 2 q^7 + 
 3 q^8 + \frac{5}{2} q^9 + 2 q^{10}  + O\left(q^{11}\right),
 \end{equation}
 \begin{equation}
\label{eq:h1Fourier}
h_1 (\t) =
\frac{1}{3} q^{\frac{3}{4}} + q^{\frac{7}{4}} + q^{\frac{11}{4}} + 2 q^{\frac{15}{4}} + q^{\frac{19}{4}} + 3 q^{\frac{23}{4}} + \frac{4}{3} q^{\frac{27}{4}} + 3 q^{\frac{31}{4}} + 2 q^{\frac{35}{4}}+ 4 q^{\frac{39}{4}}  + O\left(q^{\frac{43}{4}}\right).
\end{equation}
The function $h_\a $ is not modular invariant but one can add a piece that is non-holomorphic (and simpler) in a way that makes it modular invariant \cite{zagier75} (see equation \eqref{trans3} for the associated modular transformations). To be more precise, one defines
\begin{equation}\label{complete}
\wh{h}_\a (\t)  = \wh{h}_\a (\t, \bar{\t}) 
\coloneqq h_\a (\t) - \frac{i}{4 \sqrt{2} \pi}
\ \int\displaylimits_{-\bar{\t}}^{i \infty} \ 
\frac{\vartheta_{\frac{\a}{2}}  ( w)   }{\lp -i (w+\t) \rp^{\frac{3}{2}}}
\ddd w,
\end{equation}
where
\begin{equation*}
\vartheta_\ell(\tau) \coloneqq \sum_{n\in\ell+\IZ} q^{n^2}.
\end{equation*}
The function $f_{2, \a}$ is called a mixed mock modular form and is one of the first appearances of mock modular forms in physics. The theory of (mixed) mock modular forms has developed within the past two decades following the seminal work of Zwegers \cite{zwegers2008}.

The next obvious generalization is to $\U (3)$ Vafa-Witten theory on $\IP^2$ for which the relevant partition functions are
\begin{equation*}\label{eq:f3defn}
f_{3, \mu} (\t) \coloneqq \frac{h_{3,\mu} (\t)}{\eta (\t)^9}, 
\qquad
\mu \in \{ -1, 0, 1 \},
\end{equation*}
where the leading Fourier coefficients of $h_{3,\mu} $ are given by \cite{kool2009, Manschot:2010nc, Manschot:2014cca, Manschot:2017xcr, weist2009}
\begin{equation}
\label{eq:h30Fourier}
h_{3,0} (\t) = 
\frac{1}{9}  - q + 3 q^2 + 17 q^3 + 41 q^4 + 78 q^5 + 120 q^6 + 193 q^7 + 
 240 q^8 + 359 q^9 + 414 q^{10}+ O\left(q^{11}\right),
\end{equation}
\begin{equation}
\label{eq:h31Fourier}
h_{3,1} (\t) =h_{3,-1} (\t) = 
3 q^{\frac{5}{3}} + 15 q^{\frac{8}{3}} + 36 q^{\frac{11}{3}} + 69 q^{\frac{14}{3}} 
+ 114 q^{\frac{17}{3}} + 165 q^{\frac{20}{3}} + 246 q^{\frac{23}{3}} 
+ O\left(q^{\frac{26}{3}}\right).
\end{equation}
As in the case of $U(2)$, the function $h_{3, \mu}$ is not modular  but can be completed to a modular  object by adding an extra non-holomorphic term \cite{Manschot:2017xcr} (see equation \eqref{trans4} for the exact modular transformations) to define
\begin{equation}\label{complete2}
\wh{h}_{3,\mu} (\t, \bar{\t}) = h_{3,\mu} (\t) - 
\frac{9 \sqrt{3} i}{2 \sqrt{2} \pi}
\sum_{\a \pmod{2}}  \ 
\int\displaylimits_{-\bar{\t}}^{i \infty} \ 
\frac{\wh{h}_\a (\t, - w) \ \vartheta_{\frac{2\mu + 3\a}{6}}  (3 w)   }{\lp -i (w+\t) \rp^{\frac{3}{2}}}
d w,
\end{equation}
where for $\wh{h}_\a (\t, -w)$ we use equation \eqref{complete}, considering $\bar{\t}$ as an independent variable for which we then plug in $-w$. Because the holomorphic anomaly (i.e., the $\bar{\t}$ derivative) of the completion $\wh{h}_{3,\mu} (\t, \bar{\t})$ is given in terms of an ordinary mock modular form it is called a mock modular form of depth two according to the unpublished work of Zagier and Zwegers. The theory of such generalized mock modular forms at higher depth was developed recently in  \cite{Alexandrov:2016enp, funke2017theta,  Kudla2018, Nazaroglu:2016lmr, westerholt2016} via indefinite theta functions for lattices of arbitrary signature. These functions already found applications in physics \cite{Alexandrov:2016tnf, Alexandrov:2017qhn} and mathematics \cite{bringmann2016}. In fact, a key point in the analysis of \cite{Manschot:2017xcr} is the fact that $h_{3,\mu}$ can be written explicitly in terms of generalized Appell functions \cite{Manschot:2014cca} as in equations (6.10), (6.17), and (6.18) of \cite{Manschot:2017xcr} using which one can also find the Fourier expansion in \eqref{eq:h30Fourier} and \eqref{eq:h31Fourier}. Generalized Appell functions are particular examples of indefinite theta series. Using this fact, one can find the modular completion $\wh{h}_{3,\mu} (\t, \bar{\t})$, rewrite them in the form given in equation \eqref{complete2} and prove that they satisfy the modular transformations
\begin{equation*}
\widehat h_{3,\mu } (\t+1) 
= e^{-\frac{2\pi i \mu^2}{3} } \, \widehat h_{3,\mu} (\t), 
\qquad 
\widehat h_{3,\mu} \lp - \frac{1}{\t} \rp = 
\frac{(-i \t)^3}{\sqrt{3}} \sum_{\nu \pmod{3} } e^{-\frac{2 \pi i\mu \nu}{3} } \ 
\widehat h_{3,\nu } \lp \t \rp,
\end{equation*}
consistent with expectations from S-duality.

The goal of this paper is to exploit the modularity of $\U(3)$ Vafa-Witten invariants on $\IP^2$ to develop an exact formula for its Fourier coefficients, which makes its asymptotic form obvious with all the subleading terms calculable. For this purpose we use the Circle Method, which was first developed by Hardy and Ramanujan \cite{HardyRamanujan2, HardyRamanujan1} to study the asymptotic behavior of the (integer) partition function $p(n)$ and further refined by Rademacher \cite{Rademacher37} to give an exact formula for $p(n)$. We work with another version given by Rademacher \cite{Rademacher43} which is very suitable for understanding the origin of each term in such formulae. For the $\U (2)$ gauge group, this problem was considered in \cite{Bringmann:2010sd} in which the Circle Method was developed in order to deal with mixed mock modular forms. Our paper naturally extends this and uses the Circle Method for a higher depth mock modular form, taking as input only the form of modular transformations and completions and the leading Fourier coefficients of $h_{3,\mu}$.

We denote the $n$-th Fourier coefficient of $f_{3,\mu}$ by $\a_{3,\mu} (n)$. More specifically,
\begin{equation*}
f_{3,\mu} (\t) = \sum_{n=0}^\infty \a_{3,\mu} (n) \, q^{n- \Delta_\mu},
\where \ \ \ 
\Delta_0 \coloneqq \frac{3}{8}
\andd
\Delta_1 = \Delta_{-1} \coloneqq -\frac{31}{24}.
\end{equation*}
Our main theorem gives an exact formula for the Fourier coefficients, $\alpha_{3,\mu} (n)$.
To state it, we need some notation. Let
	$n_\mu:=n-\Delta_\mu$, $Q(x_1,x_2) \coloneqq x_1^2+x_2^2+x_1x_2$, and let
	$g^*_{k,r}$ and $g^*_{k,r_1,r_2}$ be given as
	\begin{equation*}
	g^*_{k,r}(w) \coloneqq w g_{\frac{r}{3k}}\left(\frac{3w}{2\sqrt{2}k}\right)\left(1-w^2\right)^{\frac54},\ \ \ 
	g^*_{k,r_1,r_2}(w_1, w_2):= g_{k,r_1,r_2}\left(\frac{3w_1}{2\sqrt{2}}, \frac{3w_2}{2\sqrt{2}}\right)\big(1-Q(w_1,w_2)\big)^{\frac54},
	\end{equation*}
	with the ingredients defined in equations \eqref{eq:gdefn}, \eqref{eq:fdefn}, \eqref{gnoto}, \eqref{eq:g_r1zero}, \eqref{eq:g_r2zero}, and \eqref{eq:g_r12zero}.
	Moreover the generalized Kloostermann sums are defined as
	\begin{equation*}
	K_k (\mu, \nu; n, r_1, r_2) \coloneqq 
	\sum_{\substack{ 0 \leq h < k \\ \gcd(h,k)=1}}   
	\zeta_{24k}^{-24 n_\mu h - (9+8Q(r_1,r_2))h'}
	\, \psi_{h,k}(\nu,\mu).
	\end{equation*}
	with multiplier system $\psi_{h,k}(\nu,\mu)$ given through equations \eqref{eq:multiplier}, \eqref{eq:etamultiplier}, \eqref{eq:multsys3}, and \eqref{eq:multsys3lambda} with $M = \pmat{ h' & -\frac{1+h h'}{k} \\ k & -h} \in \SL_2 (\IZ)$ for $h'$ satisfying $h h' \equiv -1 \pmod{k}$ and $\zeta_m \coloneqq e^{
		\frac{2 \pi i}{m}}$.
\begin{thm}\label{main theorem}
	We have
	\begin{align*}
	&\alpha_{3,\mu}(n)= \frac{\pi}{144} 
	\lp \frac{6}{n_\mu} \rp^{\frac{5}{4}}
	\sum_{k=1}^\infty \frac{K_k(\mu,0;n,0,0)}{k} I_{\frac52}\left(\frac{\pi\sqrt{6n_\mu}}{k}\right) \\
	&\quad -\frac{9\pi}{512}
	\lp \frac{6}{n_\mu} \rp^{\frac{5}{4}}
	 \sum_{\nu \pmod{3}} \sum_{k=1}^\infty \sum_{\substack{r \pmod{3k} \\ r\equiv \nu \pmod{3}}} \frac{K_k(\mu,\nu;n,r,0)}{k^2} \int_{-1}^1 g^*_{k,r}(w) I_{\frac52}\left(\frac{\pi\sqrt{6n_\mu\left(1-w^2\right)}}{k}\right)dw \\
	&\quad
	+ \frac{3\pi}{1024}
	\lp \frac{6}{n_\mu} \rp^{\frac{5}{4}}
	 \sum_{\nu \pmod{3}} \sum_{k=1}^\infty \sum_{\substack{r_1,r_2 \pmod{3k} \\ r_1\equiv r_2+\nu \pmod{3}}} \frac{K_k(\mu,\nu;n,r_1,r_2)}{k^3} \\
	&\hspace{4cm}\times\int_{Q(w_1,w_2)\leq 1} g^*_{k,r_1,r_2}(w_1,w_2) I_{\frac52}\left(\frac{\pi\sqrt{6n_\mu\big(1-Q(w_1,w_2)\big)}}{k}\right)dw_1 dw_2.
\end{align*}
\end{thm}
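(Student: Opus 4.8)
The plan is to implement the Circle Method in the refined form of Rademacher \cite{Rademacher43}, extending to depth two the strategy used for the depth-one ($\U(2)$) case in \cite{Bringmann:2010sd}. Since $f_{3,\mu} = h_{3,\mu}/\eta^9$ has weight $-\frac32$ and $n_\mu = n-\Delta_\mu > 0$ throughout the relevant range, the coefficient is recovered by the Cauchy integral
\[
\alpha_{3,\mu}(n) = \int_{\tau_0}^{\tau_0+1} f_{3,\mu}(\tau)\, e^{-2\pi i n_\mu \tau}\, d\tau ,
\]
taken along a horizontal segment at fixed height $\Im(\tau_0)=N^{-2}$, with $N\to\infty$ at the end. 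First I would dissect this segment using the Farey fractions of order $N$, so that each subarc is attached to a rational $h/k$. On that arc I set $\tau = \frac{h}{k} + \frac{iz}{k}$ with $z$ in a small right half-plane region, and apply $M=\pmat{h' & -\frac{1+hh'}{k}\\ k & -h}\in\SL_2(\IZ)$; since $c\tau+d = iz$ one has $M\tau = \frac{h'}{k}+\frac{i}{kz}$, which has large imaginary part as $z\to 0$, carrying a neighborhood of $h/k$ to a neighborhood of $i\infty$.

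The essential input is the modularity of the completion $\widehat h_{3,\mu}$ together with its explicit holomorphic anomaly encoded in \eqref{complete2}. I would write $h_{3,\mu}=\widehat h_{3,\mu}-(\text{non-holomorphic part})$ and transform $\widehat h_{3,\mu}$ under $M$ using the $S$- and $T$-transformations stated above, together with the multiplier for $\eta^9$; this produces the generalized multiplier system $\psi_{h,k}(\nu,\mu)$ and the weight factor $(c\tau+d)^{-3/2}$. On the transformed side the form sits near $i\infty$, where I approximate each $\widehat h_{3,\nu}$ and the attendant theta functions by their leading Fourier data — this is where only the leading coefficients of $h_{3,\mu}$ enter. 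Summing the multipliers over $0\le h<k$ with $\gcd(h,k)=1$ assembles the generalized Kloosterman sums $K_k(\mu,\nu;n,r_1,r_2)$, and the arc integrals, after deformation to Hankel contours, evaluate to $I_{\frac52}$, the order $\frac52 = 1-\left(-\frac32\right)$ being fixed by the weight.

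The three terms in the statement arise from the nested structure of \eqref{complete2}. The first contribution comes from the single polar term $\frac19 q^{-\frac38}$ of $f_{3,0}$ (the constant $\frac19$ of $h_{3,0}$ divided by $\eta^9$); under the $S$-transformation this $\nu=0$ polar part is distributed to every $\mu$, which is why the first term carries the fixed second slot $\nu=0$ and yields the clean sum over $k$ of $K_k(\mu,0;n,0,0)\,I_{\frac52}$. The non-holomorphic correction in \eqref{complete2} involves $\widehat h_\alpha(\tau,-w)\,\vartheta_{\frac{2\mu+3\alpha}{6}}(3w)$; transforming the outer theta-integration and passing to the leading part of $\widehat h_\alpha$ produces the single integral $\int_{-1}^1 g^*_{k,r}(w)\,I_{\frac52}(\cdots)\,dw$, with the theta characteristics generating the residues $r\pmod{3k}$ and the $\nu\pmod 3$ labels. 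Finally, substituting the depth-one completion \eqref{complete} of $\widehat h_\alpha$ itself turns the outer integral into an iterated one, whose transformed leading part yields the depth-two term $\int_{Q(w_1,w_2)\le 1} g^*_{k,r_1,r_2}(w_1,w_2)\,I_{\frac52}(\cdots)\,dw_1\,dw_2$; the quadratic form $Q$ and the region $Q\le 1$ reflect the rank-two lattice underlying the depth-two completion, while the functions $g^*_{k,r}$ and $g^*_{k,r_1,r_2}$ collect the (incomplete) Gaussian data of the period integrals after the rescalings $w\mapsto \frac{3w}{2\sqrt2\,k}$.

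The main obstacle is the error analysis in the depth-two regime. In contrast to the holomorphic or depth-one case, the non-holomorphic completion here is an iterated integral of theta functions, and under the Farey-arc transformation one must show both that its principal pieces reorganize exactly into the stated $w$-integrals and that all remaining contributions — the non-leading Fourier terms, the portions of the $w$-integration ranges lying away from the cusp, and the cross-interaction of the two nested integrations — vanish as $N\to\infty$. Controlling these tails uniformly requires careful bounds on the incomplete-Gaussian factors in $g^*_{k,r}$ and $g^*_{k,r_1,r_2}$ and on the Kloosterman sums, and is where the bulk of the technical work lies; the final evaluations of the Bessel integrals are then routine.
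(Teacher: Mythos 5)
Your outline reproduces the paper's overall skeleton correctly: Cauchy integral, Farey dissection, the transformation $M=\pmat{h' & -\frac{1+hh'}{k}\\ k & -h}$ sending the arc near $h/k$ to a neighborhood of $i\infty$, transformation of the completion $\widehat h_{3,\mu}$ to produce the multiplier $\psi_{h,k}(\nu,\mu)$ and hence the Kloosterman sums, extraction of principal parts (only $f_{3,0}$, $f_0$, and $1/\eta^9$ have polar terms, which is why the first term carries $\nu=0$), and the final Bessel evaluation with order $\frac52=1-\left(-\frac32\right)$. The minor differences (horizontal segment at height $N^{-2}$ versus Rademacher's Ford-circle path, and the normalization $\tau=\frac hk+\frac{iz}{k}$ versus $\frac hk+\frac{iz}{k^2}$) are cosmetic.

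However, there is a genuine gap at exactly the point you defer to ``technical work'': you give no mechanism for splitting the Eichler integrals into a principal part plus a controllable error, and such a mechanism is the mathematical core of the paper, not routine estimation. After the modular transformation, the non-holomorphic contributions are the theta integrals $\mathcal E_{1,j,-h'/k}$ and the iterated integral $\mathcal E_{2,\nu,-h'/k}$, whose $z$-dependence sits inside factors $\lp -i(w+\tau)\rp^{-3/2}$; in that form one cannot isolate the growing piece as $z\to0$, nor do the functions $g^*_{k,r}$, $g^*_{k,r_1,r_2}$ appearing in the statement ever arise. The paper's Section 4 resolves this by converting both integrals into Mordell-type Gaussian integrals: in one dimension by splitting the theta lattice into progressions mod $6k$ and applying the cotangent identity $\pi\cot(\pi x)=\lim_{M\to\infty}\sum_{m=-M}^{M}\frac1{x+m}$ (this, not ``incomplete Gaussian data,'' is where $g_c$ and $f_c$ come from), and in two dimensions by a genuinely new identity (Theorem \ref{thm2dim}, resting on Lemma \ref{lem:integral_identity}) proved by averaging over the orbit of the $A_2$ Weyl group generated by the involutions $\iota_1,\iota_2$, with delicate separate treatment of the degenerate residues $r_i\equiv0\pmod{3k}$ producing the extra terms in \eqref{eq:g_r1zero}--\eqref{eq:g_r12zero}. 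Only in this Gaussian form can one truncate to $Q(\boldsymbol w)\le 3b$ with $b=\frac38$ (Section 5), prove the $O(\log k)$ and $O\lp\log(k)^2\rp$ bounds uniform in $h'$ and $z$ that make $\mathcal E(N)\ll N^{-3/2}\log(N)^2\to0$, and recognize the truncated integrals as the $w$-integrals in the theorem after closing the contour over the full standard circle. Without this conversion your plan cannot produce the stated formula or justify convergence of the triple sums.
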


Using the asymptotic behavior of the Bessel functions we obtain the following.
\begin{cor}\label{ascor}
	We have, as $n\to \infty$,
	\begin{equation*}
	\alpha_{3,\mu}(n) = 
	\frac{1}{4(6n)^\frac32}e^{\pi\sqrt{6n}}
	\lp
	1 - \frac{81}{8 \pi (6 n)^\frac14} + \lp \frac{243\sqrt{3}}{16 \pi^2} - \frac{3}{\pi} \rp
	\frac{1}{(6 n)^\frac12} + O \lp n^{-\frac34} \rp
	\rp.
	\end{equation*}
\end{cor}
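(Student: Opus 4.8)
The plan is to derive Corollary~\ref{ascor} directly from the exact formula in Theorem~\ref{main theorem} by isolating the dominant contribution as $n\to\infty$ and carefully tracking the first three terms of its asymptotic expansion. The large parameter governing the growth is $\sqrt{6n_\mu}$, and since $n_\mu = n - \Delta_\mu$ with $\Delta_\mu$ a fixed rational, one has $n_\mu = n(1 + O(n^{-1}))$, so to the order claimed ($O(n^{-3/4})$ relative correction) we may freely replace $n_\mu$ by $n$ in the prefactors after expanding. The key analytic input is the asymptotic behavior of the modified Bessel function $I_{5/2}(x)$ as $x\to\infty$, namely $I_{5/2}(x) = \frac{e^x}{\sqrt{2\pi x}}\bigl(1 - \frac{(4\cdot 25 - 1)/8}{x} + \cdots\bigr)$ (more precisely the standard asymptotic series with half-integer index, which in fact terminates for $I_{5/2}$ into an elementary expression times $e^x$); I would use this to extract $e^{\pi\sqrt{6n_\mu}/k}$ growth from each summand.

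First I would argue that only the $k=1$ term in each of the three sums contributes at the orders retained. For $k\ge 2$ the exponential factor is $e^{\pi\sqrt{6n_\mu}/k}$, which is exponentially smaller than the $k=1$ contribution $e^{\pi\sqrt{6n_\mu}}$, so the entire tail $k\ge 2$ is absorbed into the error term. This reduces the problem to understanding the $k=1$ pieces of all three lines. For $k=1$ the Kloostermann sums collapse: the sum over $h$ has the single term $h=0$ (with $h'$ determined mod $1$), so $K_1(\mu,\nu;n,r_1,r_2)$ becomes an explicit root of unity times the multiplier evaluated at the relevant $\SL_2(\IZ)$ element, which I would compute from the definitions referenced in \eqref{eq:multiplier}--\eqref{eq:multsys3lambda}. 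At $k=1$ the residue classes $r, r_1, r_2 \pmod{3}$ collapse the lattice sums to finitely many terms.

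Next I would treat the two integral lines. In the second line, the $k=1$ integral is $\int_{-1}^1 g^*_{1,r}(w)\, I_{5/2}\bigl(\pi\sqrt{6n_\mu(1-w^2)}\bigr)\,dw$, and in the third the analogous two-dimensional integral over the ellipse $Q(w_1,w_2)\le 1$. Because the Bessel argument is maximized at $w=0$ (respectively $w_1=w_2=0$), the integrals concentrate near the origin as $n\to\infty$, and I would evaluate them by Laplace's method: Taylor-expand the smooth factors $g^*_{1,r}(w)(1-w^2)^{\text{(from Bessel prefactor)}}$ around the maximum, and integrate the resulting Gaussian against the expanded exponent $\pi\sqrt{6n_\mu}\,(1-\frac12 w^2 + \cdots)$. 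Each such Laplace evaluation produces a factor $n^{-1/4}$ per integration dimension relative to the naive leading term, so the one-dimensional integral contributes at relative order $n^{-1/4}$ and the two-dimensional integral at relative order $n^{-1/2}$; this is precisely the mechanism generating the three displayed orders $1$, $n^{-1/4}$, $n^{-1/2}$ in the corollary, coming respectively from line one, line two, and line three (together with subleading Bessel corrections in line one).

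The main obstacle I anticipate is bookkeeping the coefficients consistently across the three sources: the leading constant $\frac{1}{4(6n)^{3/2}}$ should emerge from line one at $k=1$, but the coefficients $-\frac{81}{8\pi}$ and $\frac{243\sqrt3}{16\pi^2} - \frac3\pi$ mix contributions from the subleading Bessel term of line one, the Laplace expansion of line two, and the leading Laplace term of line three; getting the exact rational and $\pi$-dependent constants requires evaluating $g^*_{1,r}$ and $g^*_{1,r_1,r_2}$ at the origin from the definitions in \eqref{eq:gdefn}--\eqref{eq:g_r12zero}, carrying the Hessian determinant $\det(\nabla^2 Q) = 3$ (which produces the $\sqrt3$), and combining everything with the correct normalizations $\frac{\pi}{144}$, $\frac{9\pi}{512}$, $\frac{3\pi}{1024}$. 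I would organize this as three separate asymptotic computations and then sum, checking that the net leading term reproduces $\frac14(6n)^{-3/2}e^{\pi\sqrt{6n}}$ as a consistency test before reading off the two subleading coefficients.
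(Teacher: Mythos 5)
Your proposal follows essentially the same route as the paper's proof: restrict to $k=1$ (the $k\ge 2$ tails being exponentially suppressed), compute the collapsed Kloostermann sum $K_1(\mu,\nu;n,r_1,r_2)=\tfrac{1}{\sqrt3}\zeta_3^{-2\mu\nu}$, apply the $I_{5/2}$ asymptotics to the first line, and evaluate the one- and two-dimensional integrals by the saddle point method at the origin (where $g^*_{1,r}(0)=\tfrac{2\sqrt2}{\pi}\delta_{r,0}$ and $g^*_{1,\boldsymbol{r}}(\boldsymbol{0})=\tfrac{27}{\pi^2}\delta_{\boldsymbol{r},\boldsymbol{0}}$), with the three orders $1$, $n^{-1/4}$, $n^{-1/2}$ arising exactly as you describe. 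One arithmetic caution: the subleading Bessel coefficient is $\frac{4\ell^2-1}{8}=\frac{4(5/2)^2-1}{8}=3$, not $(4\cdot 25-1)/8$ as written in your expansion; this value of $3$ is precisely what produces the $-\frac{3}{\pi}$ contribution to the $n^{-1/2}$ coefficient, so the slip must be corrected for the constants to come out right.
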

\begin{rem}
	One could determine further terms in the asymptotic expansion of $\alpha_{3,\mu}(n)$.
\end{rem}
The use of Circle Method to get an exact formula for Fourier coefficients of ordinary modular forms requires the precise transformation properties of these modular forms and their principal (or polar) parts which separate their growing behavior near the cusps. 
So for our case too, we start by reviewing modular transformation properties of $h_{3,\mu}$ and other associated functions that appear in its modular completion. For this purpose, in Section \ref{sec2}, we introduce certain multiplier systems that appear in these modular transformations and record some of their properties. Then, in Section \ref{sec3} we give the modularity behavior of the functions $f_{3,\mu}$ which lets us systematically work out the behavior of $f_{3,\mu}$ near the real line. Because of depth two mock modularity of  $f_{3,\mu}$, certain (one- and two-dimensional) theta integrals appear in the modular transformation equations. Next, in Section \ref{sec4}, we find Mordell-type representations for these theta integrals which reduce the $\tau$ dependence of the integrands to exponential functions. This allows us to split pieces that grow closer to the real line, which can be thought of as principal (or polar) parts of these contributions. In Section \ref{sec5}, we bound these integrals to find upper bounds on the error one gets by restricting to the these principal parts. Finally using these ingredients, in Section \ref{sec6}, we prove Theorem \ref{main theorem} using the Circle Method and find its asymptotics to prove Corollary \ref{ascor}. We finish the paper in Section \ref{sec7} by giving numerical results.

\section*{Acknowledgments} The research of the first author is supported by the Alfried Krupp Prize for Young University Teachers of the Krupp foundation
and the research leading to these results receives funding from the European Research Council under the European
Union's Seventh Framework Programme (FP/2007-2013) / ERC Grant agreement n. 335220 - AQSER.
The research of the second author is supported by the European Research Council under the European Union's Seventh Framework Programme (FP/2007-2013) / ERC Grant agreement n. 335220 - AQSER.
The authors thank Chris Jennings-Shaffer for helpful comments on an earlier version of the paper and thank the anonymous referees for their useful suggestions on the exposition of the paper.

\section{Multiplier systems}\label{sec2}
We start by introducing two multiplier systems, which we denote by $\psi_{2,M}$ and $\psi_{3,M}$ for 
$M = \left(\begin{smallmatrix} a & b \\ c & d  \end{smallmatrix}\right) \in\SL_2(\IZ)$. These arise as Weil representations associated with discriminant forms for $A_1$ and $A_2$ lattices, respectively. For easy reference, we give explicit formulae for both multiplier systems and refer the reader to \cite{CohenStromberg} for further details.

Firstly, we define $\psi_{2,M}$ as, with $\alpha,\beta\in\mathbb Z/2\mathbb Z$ ,
\begin{equation}\label{eq:multsys2}
\psi_{2,M} ( \a, \b) \coloneqq
\begin{cases}
i^{a b  \a^2}   e^{- \frac{\pi i}{4} (1 - \mathrm{sgn}(d))} \,  \d_{\a,\b}
\quad &\mbox{if } c=0, \\
\frac{e^{-\frac{\pi i}{4}   \mathrm{sgn} (c)}}{\sqrt{2 | c|}}
 \displaystyle\sum_{j=0}^{|c|-1} e^{\frac{\pi i}{2c} \lp  
	a \lp 2 j + \a \rp^2  - 2 \b \lp 2 j +\a \rp +d \b^2     \rp  }  
\quad &\mbox{if } c \neq 0,
\end{cases}
\end{equation}
where as usual $\delta_{\alpha,\beta}=0$ unless $\alpha=\beta$ in which case it equals 1.

Then, we define, with $\mu,\nu\in\mathbb Z/3\mathbb Z$,
\begin{equation}\label{eq:multsys3}
\psi_{3,M} ( \mu, \nu) \coloneqq 
\begin{cases}
e^{\frac{2\pi i}{3} a b \mu^2}   i^{\mathrm{sgn}(d)-1}  \d_{\mu,\nu}
\quad &\mbox{if } c=0, \\
\frac{i^{-\mathrm{sgn} (c)}}{\sqrt{3} | c| }
\l_{3,M} (\mu, \nu) 
\quad &\mbox{if } c \neq 0,
\end{cases}
\end{equation}
where
\begin{equation}\label{eq:multsys3lambda}
\l_{3,M} (\mu, \nu) 
\coloneqq \sum_{j_1,j_2=0}^{|c|-1} 
\exp \lp   \frac{2\pi i}{3c} \lp
a \mu^2 + d \nu^2 - 2 \mu \nu +3 a\left(j_1^2 -j_1 j_2 +j_2^2\right) + 3j_1(a\mu - \nu)
\rp  \rp.
\end{equation}
Importantly for our arguments, $\psi_{2,M}$ and $\psi_{3,M}$ are unitary. It is enough to verify this for the generators of $\text{SL}_2(\mathbb Z)$, $T \coloneqq \pmat{1 & 1 \\ 0 & 1}$ and $S \coloneqq \pmat{0 & -1 \\ 1 & 0}$. To state another property that is useful,  define $M^\sharp \coloneqq \pmat{a & -b \\ -c & d}$, where we assume from now on that $d>0$ if $c=0$. Then by directly inspecting equations \eqref{eq:multsys2}, \eqref{eq:multsys3}, and \eqref{eq:multsys3lambda}, one can see that
\begin{equation}\label{start}
\psi_{2,M^\sharp} (\a, \b) = \psi_{2,M}^* (\a,\b)
\andd
\psi_{3,M^\sharp} (\mu, \nu) = \psi_{3,M}^* (\mu,\nu),
\end{equation}
where $\ast$ denotes the complex conjugate.

Finally we give a lemma that states several (mock) modular transformations.
\begin{lem}
	We have, for $M=\left(\begin{smallmatrix}
	a & b \\ c & d
	\end{smallmatrix}\right)\in\SL_2(\IZ)$ and $\mathfrak z\in\mathbb C$ with $\textnormal{Im}(\mathfrak z)<0$,
	\begin{align}\label{trans1}
	\vartheta_{\frac{\a}{2}} \lp \frac{a \t + b}{c \t + d} \rp &= 
	\lp c \t + d \rp^{\frac{1}{2}} 
	\sum_{\b \pmod{2} } \psi_{2,M} ( \a, \b) \   \vartheta_{\frac{\b}{2}} \lp \t \rp ,\\
	\label{trans2}
	\vartheta_{\frac{2\mu + 3 \a}{6}} \lp 3 \frac{a \t + b}{c \t + d} \rp &= 
	\lp c \t + d \rp^{\frac{1}{2}} 
	\sum_{\nu   \pmod{3}} \sum_{\b \pmod{2} }
	\psi_{2,M}^* (\a,\b) \ \psi_{3,M} (\mu,\nu) \   \vartheta_{\frac{2\nu +3 \b}{6}} \lp 3\t \rp ,\\ 
	\label{trans3}
	\wh{h}_\a \lp \frac{a \t + b}{c \t + d},\frac{a\mathfrak z+b}{c\mathfrak z+d} \rp &= 
	\lp c \t + d \rp^{\frac{3}{2}} 
	\sum_{\b \pmod{2}} \psi_{2,M}^* ( \a, \b) \  \wh{h}_\b \lp \t,\mathfrak z \rp ,\\
	\label{trans4}
\wh{h}_{3,\mu} \lp \frac{a \t + b}{c \t + d} \rp &= 
\lp c \t + d \rp^3 
\sum_{\nu  \pmod{3} } \psi_{3,M}^* ( \mu, \nu)  \ \wh{h}_{3,\nu} \lp \t \rp.
\end{align}
\end{lem}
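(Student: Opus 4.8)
The plan is to treat the holomorphic theta transformations \eqref{trans1}--\eqref{trans2} by reducing to the generators $T=\pmat{1&1\\0&1}$ and $S=\pmat{0&-1\\1&0}$ of $\SL_2(\IZ)$, and the non-holomorphic completions \eqref{trans3}--\eqref{trans4} by a direct Eichler-integral substitution valid for general $M$. For \eqref{trans1} and \eqref{trans2} I would use that $\psi_{2,M}$ and $\psi_{3,M}$ are the Weil representations attached to the discriminant forms of $A_1$ and $A_2$, so that both sides of each identity are components of vector-valued theta functions of weight $\tfrac12$ transforming under the same representation; it then suffices to verify the identities on $T$ and $S$. On $T$ the claim is immediate from $\vartheta_\ell(\t)=\sum_{n\in\ell+\IZ}q^{n^2}$ together with the $c=0$ cases of \eqref{eq:multsys2} and \eqref{eq:multsys3}. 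On $S$ the identity is the classical Jacobi theta inversion obtained by Poisson summation, where the finite Gauss sums in the $c\neq 0$ branches of the multiplier systems arise exactly as the discrete Fourier-transform matrices of the finite quadratic modules. For \eqref{trans2} the rescaling $\vartheta_{(2\mu+3\a)/6}(3w)$ corresponds to the lattice $A_1$ scaled by $\sqrt3$, and the appearance of \emph{both} $\psi_{2,M}^*$ and $\psi_{3,M}$ reflects the orthogonal splitting of the quadratic form under Poisson summation across the two index variables $\a$ and $\mu$; consistency with \eqref{trans1} and \eqref{start} fixes which multiplier is conjugated.

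For \eqref{trans3} I would identify the two-variable completion $\wh h_\a(\t,\mathfrak z)$ of \eqref{complete} with (a component of) Zagier's non-holomorphic weight-$\tfrac32$ Eisenstein series, whose shadow is $\vartheta_{\a/2}$: the Eichler integral over $(-\mathfrak z,i\infty)$ is precisely the term that cancels the mock anomaly of $h_\a$. Treating $\t$ and $\mathfrak z$ as independent and letting both transform by $M$, one substitutes the integration variable by the fractional-linear action, so that $\vartheta_{\a/2}$ inside the integral transforms by \eqref{trans1} (weight $\tfrac12$, multiplier $\psi_{2,M}$) while the kernel $(-i(w+\t))^{-3/2}$ supplies the complementary automorphy factor; the non-modularity of the endpoint $i\infty$ produces exactly a period contribution matching the failure of $h_\a$ to be modular, and the conjugation inherent in the completion turns $\psi_{2,M}$ into $\psi_{2,M}^*$ via \eqref{start}, giving total weight $\tfrac32$ and multiplier $\psi_{2,M}^*$ as claimed.

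The heart of the matter, and the step I expect to be the main obstacle, is \eqref{trans4}. Here I would insert the iterated completion \eqref{complete2}, transform $\t\mapsto M\t$, and substitute in the integration variable $w$ so that the contour endpoints are preserved and the two factors of the integrand transform by the already-established rules: $\wh h_\a(\t,-w)$ by \eqref{trans3} and $\vartheta_{(2\mu+3\a)/6}(3w)$ by \eqref{trans2}. The weights $\tfrac32$ and $\tfrac12$ together with the integration Jacobian combine into the required total weight $3$. The genuinely delicate point is the multiplier bookkeeping: \eqref{trans3} contributes a factor $\psi_{2,M}^*$ acting on the index $\a$, while \eqref{trans2} contributes $\psi_{2,M}^*\,\psi_{3,M}$; summing over the intermediate index $\a\pmod 2$ and invoking unitarity of $\psi_{2,M}$ (so that $\sum_\a \psi_{2,M}(\a,\b)\,\psi_{2,M}^*(\a,\b')=\d_{\b,\b'}$, using \eqref{start} to pass between $M$ and $M^\sharp$) collapses the double sum and leaves precisely $\psi_{3,M}^*(\mu,\nu)$. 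One must also organize the whole argument around the modular completion rather than the holomorphic $h_{3,\mu}$ itself, since only the full $\wh h_{3,\mu}$ is modular, and check that on $S$ the computation reproduces the prefactor $(-i\t)^3/\sqrt3$. Controlling the branch of $(-i(w+\t))^{-3/2}$ under the substitution, the interchange of the two contour integrals, and the chain of multiplier contractions is where the real work lies.
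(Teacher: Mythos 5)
Your handling of \eqref{trans1} and \eqref{trans2} is essentially the paper's proof (reduce to $T$ and $S$; $T$ is immediate from the $q$-expansion, $S$ is Poisson summation), and your appeal to Zagier's nonholomorphic Eisenstein series for \eqref{trans3} is equivalent to the paper's citation of the well-known $T$- and $S$-transformations of $\widehat{h}_\alpha$. The genuine gap is in \eqref{trans4}, where your argument is circular. You propose to insert \eqref{complete2}, substitute in $w$ ``so that the contour endpoints are preserved,'' and let the integrand transform by \eqref{trans3} and \eqref{trans2}. But no such substitution exists: the fractional-linear change of variables $w \mapsto \frac{dw+b}{cw+a}$, which is forced on you by the transformation laws of $\widehat{h}_\alpha(\cdot,-w)$ and $\vartheta_{\frac{2\mu+3\alpha}{6}}(3w)$, sends the endpoint $i\infty$ to the rational cusp $-\frac{a}{c}$, not back to $i\infty$. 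The transformed correction term therefore differs from the one appearing in $\sum_\nu \psi_{3,M}^*(\mu,\nu)\,\widehat{h}_{3,\nu}(\tau)$ by leftover period integrals $\int_{-a/c}^{i\infty}(\cdots)\,dw$ (these are precisely the Eichler integrals $\mathcal{E}_{1,j,-\frac{a}{c}}$ and $\mathcal{E}_{2,\nu,-\frac{a}{c}}$ of the paper), and \eqref{trans4} holds if and only if those leftover periods exactly cancel the failure of the holomorphic part $h_{3,\mu}$ to transform with weight $3$ and multiplier $\psi_{3,M}^*$. Your argument uses no property of the $q$-series $h_{3,\mu}$ beyond its existence, so it cannot produce this cancellation: knowing the modular anomaly of $h_{3,\mu}$ is logically equivalent to the statement being proved.

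The missing input is exactly what the paper imports from \cite{Manschot:2017xcr}, where the $T$- and $S$-transformations of $\widehat{h}_{3,\mu}$ are established from the explicit representation of $h_{3,\mu}$ by generalized Appell functions (indefinite theta series). Given that external result, the paper's proof of the lemma is only a matching statement: it verifies that $\psi_{2,M}$ and $\psi_{3,M}$, specialized at $M\in\{T,S\}$ via \eqref{eq:multsys2}--\eqref{eq:multsys3lambda}, reproduce the cited roots of unity and Gauss-sum factors; the footnote in the paper stresses that this, and not a from-scratch modularity proof, is the lemma's purpose. Incidentally, the manipulations you describe --- the substitution in the iterated integral, \eqref{trans2} and \eqref{trans3} applied to the integrand, unitarity of $\psi_{2,M}$ together with \eqref{start} collapsing the sum over $\alpha$ --- are exactly the content of the paper's Lemma \ref{lem:mocktrans}, but used in the opposite direction: there \eqref{trans4} is taken as known, and the leftover cusp integrals are solved for, yielding the mock transformation of $f_{3,\mu}$. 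As a proof of \eqref{trans4} itself, the same manipulations beg the question.
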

\begin{proof} 
	It is enough to show the claims for $M\in\{T,S\}$.\footnote{Note that the goal of this lemma is not to prove that $ \psi_{2,M} ( \a, \b)$ and $\psi_{3,M} ( \mu, \nu)$ are in fact multiplier systems for $\mathrm{SL}_2(\IZ)$. Instead, the aim is to show that these multiplier systems yield the multiplier systems of the (mock) modular forms we are interested in. Verifying that $T$ and $S$ transformations are consistent with the given multiplier systems is enough to show this claim.} For this purpose and as a reference, we list the relevant $T$ and $S$ transformations. In equations \eqref{trans1} and \eqref{trans2}, we have theta functions whose $T$ transformations immediately follow from their definition as $q$-series and whose $S$ transformations are proved in a standard way via Poisson summation and are well-known. More specifically, we have
\begin{align*}
\vartheta_{\frac{\a}{2} } (\t+1) 
&= i^{\a^2} \, \vartheta_{\frac{\a}{2}} (\t), 
&\quad 
\vartheta_{\frac{\a}{2} } \lp - \frac{1}{\t} \rp &= 
\frac{(-i \t)^{\frac{1}{2}}}{\sqrt{2}} \sum_{\b \pmod{2} } (-1)^{\a \b} \ 
\vartheta_{\frac{\b}{2} } \lp \t \rp,  
\\
\vartheta_{\frac{\ell}{6} } \lp 3( \t+1 ) \rp
&= e^{\frac{\pi i \ell^2}{6} } \, \vartheta_{\frac{\ell}{6}} (3 \t), 
&\quad 
\vartheta_{\frac{\ell}{6} } \lp - \frac{3}{\t} \rp &= 
\frac{(-i \t)^{\frac{1}{2}}}{\sqrt{6}} \sum_{r \pmod{6} } e^{- \frac{\pi i \ell  r}{3}  }\ 
\vartheta_{\frac{r}{6} } \lp 3 \t \rp .
\end{align*}
For equation \eqref{trans3}, we use the well-known transformation properties 
\begin{equation*}
\widehat h_{\a } (\t+1) 
= i^{-\a^2} \, \widehat h_{\a} (\t), 
\qquad 
\widehat h_{\a } \lp - \frac{1}{\t} \rp = -
\frac{(-i \t)^{\frac{3}{2}}}{\sqrt{2}} \sum_{\b \pmod{2} } (-1)^{\a \b} \ 
\widehat h_{\b } \lp \t \rp .
\end{equation*}
Finally, we compare equation \eqref{trans4} to the transformation properties given in \cite{Manschot:2017xcr}
\begin{equation*}
\widehat h_{3,\mu } (\t+1) 
= e^{-\frac{2\pi i \mu^2}{3} } \, \widehat h_{3,\mu} (\t), 
\qquad 
\widehat h_{3,\mu} \lp - \frac{1}{\t} \rp = 
\frac{(-i \t)^3}{\sqrt{3}} \sum_{\nu \pmod{3} } e^{-\frac{2 \pi i\mu \nu}{3} } \ 
\widehat h_{3,\nu } \lp \t \rp .
\end{equation*}
\end{proof}

We also need the modular transformations for the Dedekind $\eta$-function:
\begin{equation}\label{etatrans}
\eta\left(\frac{a\t+b}{c\t+d}\right)=\psi\begin{pmatrix}
a & b\\c & d
\end{pmatrix}
\,
(-i(c\t+d))^{\frac{1}{2}}
\  \eta(\t),
\end{equation}
where for $c \neq 0$, we define
\begin{equation}\label{eq:etamultiplier}
\psi\begin{pmatrix}
a & b\\c & d
\end{pmatrix}:=\begin{cases}
\left(\frac{d}{|c|}\right)e^{\frac{\pi i}{12}\left((a+d)c-bd\left(c^2-1\right)-3c+3\right)}\quad &\text{ if c is odd,}\\
\left(\frac{c}{d}\right)e^{\frac{\pi i}{12}\left(ac\left(1-d^2\right)+d(b-c+3)\right)} &\text{ if c is even}.
\end{cases}
\end{equation}

Lastly, using equations \eqref{eq:multsys3}, \eqref{eq:multsys3lambda}, and \eqref{eq:etamultiplier} we set
\begin{equation}\label{eq:multiplier}
\chi_M (\mu, \nu) \coloneqq i \, \psi(M)^9 \, \psi_{3,M} (\mu,\nu).
\end{equation}

\section{The transformation behavior of the functions $f_{3,\mu}$}\label{sec3}
For $j,\nu\in\IN_0, \varrho\in\IQ$, define the theta integrals
\begin{align}\label{defineTint}
\mathcal{E}_{1, j, \varrho} (\t) &\coloneqq 
\int\displaylimits_{\varrho}^{i \infty} \, 
\frac{ \vartheta_{\frac{j}{6}} (3 w)  }{\lp -i \lp w + \t \rp \rp^{\frac{3}{2}}  }
d w,
\\  \label{defineTint2}
\mathcal{E}_{2, \nu,\varrho } (\t)
&\coloneqq 
\sum_{\a     \pmod{2}}
\int\displaylimits_{\varrho }^{i \infty} \, 
\int\displaylimits_{w_1 }^{i \infty} \, 
\frac{\vartheta_{\frac{\a}{2}} (w_2)  \vartheta_{\frac{2 \nu + 3\a}{6}} (3 w_1) }
{\lp -i \lp w_2 + \t \rp \rp^{\frac{3}{2}}\lp -i \lp w_1 + \t \rp \rp^{\frac{3}{2}}} 
d w_2 \, d w_1.
\end{align}

The following lemma finds the mock modular transformation of $f_{3,\mu}$.
\begin{lem}\label{lem:mocktrans}
	For $M=(\begin{smallmatrix}
	a & b\\ c & d
	\end{smallmatrix})\in\textnormal{SL}_2(\mathbb Z)$ with $c \neq 0$, $f_{3,\mu} (\t)(-i(c\t+d))^{-\frac32}$ equals
\begin{align*}
&\sum_{\nu     \pmod{3}} \chi_M (\nu, \mu)
\Bigg(
f_{3,\nu} \lp \frac{a \t + b}{c \t + d} \rp
- \frac{9\sqrt{3}i}{2\sqrt{2}\pi} 
\sum_{\a     \pmod{2}}
f_\alpha \lp \frac{a \t + b}{c \t + d} \rp
\mathcal{E}_{1,2\nu+3\a,-\frac{a}{c}} \lp \frac{a \t + b}{c \t + d} \rp
\\
&\hspace{5cm}- \frac{9 \sqrt{3}}{16\pi^2}
f \lp \frac{a \t + b}{c \t + d} \rp
\mathcal{E}_{2,\nu,-\frac{a}{c}} \lp \frac{a \t + b}{c \t + d} \rp
\Bigg)  ,
\notag
\end{align*}
where
\begin{equation*}
f_\alpha (\t) \coloneqq \frac{h_\a (\t)}{\eta(\t)^9}, \qquad
f (\t) \coloneqq \frac{1}{\eta(\t)^9}.
\end{equation*}
\end{lem}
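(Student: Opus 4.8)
The plan is to exploit that the completion $\wh h_{3,\mu}$ is a genuine (non‑holomorphic) weight‑$3$ modular object obeying \eqref{trans4}, whereas $f_{3,\mu}$ is built from its holomorphic part. Write $h_{3,\mu}=\wh h_{3,\mu}-g_{3,\mu}$, where $g_{3,\mu}(\t,\bar\t)\coloneqq -\frac{9\sqrt3 i}{2\sqrt2\pi}\sum_{\a\pmod{2}}\int_{-\bar\t}^{i\infty}\frac{\wh h_\a(\t,-w)\,\vartheta_{\frac{2\mu+3\a}{6}}(3w)}{(-i(w+\t))^{3/2}}\,dw$ is the correction in \eqref{complete2}. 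Inverting \eqref{trans4} by unitarity of $\psi_{3,M}$ gives $\wh h_{3,\mu}(\t)=(c\t+d)^{-3}\sum_{\nu\pmod{3}}\psi_{3,M}(\nu,\mu)\,\wh h_{3,\nu}(M\t)$, where $M\t\coloneqq\frac{a\t+b}{c\t+d}$. Re‑expanding both sides through \eqref{complete2} isolates
\[
h_{3,\mu}(\t)=(c\t+d)^{-3}\sum_{\nu\pmod{3}}\psi_{3,M}(\nu,\mu)\,h_{3,\nu}(M\t)+G_\mu(\t),
\]
with $G_\mu\coloneqq (c\t+d)^{-3}\sum_\nu\psi_{3,M}(\nu,\mu)\,g_{3,\nu}(M\t)-g_{3,\mu}(\t)$. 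Every term except $G_\mu$ is manifestly holomorphic, so $G_\mu$ must be holomorphic as well; the content of the lemma is to evaluate it as the theta integrals $\mathcal{E}_1,\mathcal{E}_2$.

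The key computation is to push $g_{3,\mu}(\t)$ into the $M\t$‑frame by the substitution $w=M^\sharp v$, i.e.\ $v=(M^\sharp)^{-1}w$. This rests on the two M\"obius identities $v+\t=\frac{(c\t+d)(w+M\t)}{cw+a}$ and $-v=M^{-1}(-w)$, under which the contour $v\in[-\bar\t,i\infty)$ maps to $w\in[-\overline{M\t},-\frac{a}{c}]$. The outer theta $\vartheta_{\frac{2\mu+3\a}{6}}(3v)$ then transforms by \eqref{trans2} with matrix $(M^\sharp)^{-1}$ (producing $(cw+a)^{\frac12}$ and the multipliers $\psi_{2,(M^\sharp)^{-1}}^*,\psi_{3,(M^\sharp)^{-1}}$), while $\wh h_\a(\t,-v)=\wh h_\a(\t,M^{-1}(-w))$ transforms by the inverse of \eqref{trans3} (producing $(c\t+d)^{-3/2}$ and $\psi_{2,M}$). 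Together with the Jacobian $dv=(cw+a)^{-2}dw$ and the factor from $(-i(v+\t))^{3/2}$, all powers of $(cw+a)$ cancel and a clean $(c\t+d)^{-3}$ survives. Summing over $\a$ and invoking \eqref{start} together with unitarity collapses the $A_1$‑multipliers to $\delta_{\b,\gamma}$ and rewrites $\psi_{3,(M^\sharp)^{-1}}(\mu,\nu)=\psi_{3,M}(\nu,\mu)$. The outcome is $g_{3,\mu}(\t)=(c\t+d)^{-3}\sum_\nu\psi_{3,M}(\nu,\mu)\,\widetilde g_{3,\nu}(M\t)$, where $\widetilde g_{3,\nu}$ is the $M\t$‑frame integrand of $g_{3,\nu}$ but with the upper endpoint $i\infty$ replaced by $-\frac{a}{c}$.

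Substituting this back, $G_\mu=(c\t+d)^{-3}\sum_\nu\psi_{3,M}(\nu,\mu)\big(g_{3,\nu}(M\t)-\widetilde g_{3,\nu}(M\t)\big)$, and the difference of the $\int_{-\overline{M\t}}^{i\infty}$ and $\int_{-\overline{M\t}}^{-a/c}$ integrals cancels on its common part, leaving the holomorphic integral $\int_{-a/c}^{i\infty}$, so the $\bar\t$‑dependence disappears exactly as it must. Expanding the surviving $\wh h_\b(M\t,-w)$ via \eqref{complete} separates the $h_\b(M\t)$‑piece, which yields the single integrals $\mathcal{E}_{1,2\nu+3\a,-\frac{a}{c}}(M\t)$ with coefficient $-\frac{9\sqrt3 i}{2\sqrt2\pi}$, from the shadow piece, which yields $\mathcal{E}_{2,\nu,-\frac{a}{c}}(M\t)$ with coefficient $-\frac{9\sqrt3}{16\pi^2}$. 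Dividing by $\eta(\t)^9$ and applying \eqref{etatrans} in the form $\eta(\t)^{-9}=\psi(M)^9(-i(c\t+d))^{9/2}\eta(M\t)^{-9}$, collecting $(-i(c\t+d))^{9/2}(c\t+d)^{-3}=i\,(-i(c\t+d))^{3/2}$, and folding $i\,\psi(M)^9\psi_{3,M}(\nu,\mu)$ into $\chi_M(\nu,\mu)$ via \eqref{eq:multiplier}, produces precisely the claimed identity.

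I expect the second paragraph to be the main obstacle. Verifying the two M\"obius identities and the correct endpoint images is routine, but tracking the branches of the $\tfrac32$‑powers and, above all, showing that the phases of the two multiplier systems collapse — namely $\sum_{\a}\psi_{2,M}(\b,\a)\psi_{2,(M^\sharp)^{-1}}^*(\a,\gamma)=\delta_{\b,\gamma}$ and $\psi_{3,(M^\sharp)^{-1}}(\mu,\nu)=\psi_{3,M}(\nu,\mu)$ — is delicate, since $\psi_{2,\bullet}$ and $\psi_{3,\bullet}$ are multiplier (metaplectic) systems rather than ordinary representations; these identities follow from unitarity and \eqref{start} but can also be checked directly from \eqref{eq:multsys2}--\eqref{eq:multsys3lambda}. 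A secondary subtlety is the meaning of $\int_{-a/c}^{i\infty}$ at the rational endpoint, whose convergence is addressed in the later sections; in this argument it is legitimate because it arises only as the convergent difference $\int_{-\overline{M\t}}^{i\infty}-\int_{-\overline{M\t}}^{-a/c}$ before recombination.
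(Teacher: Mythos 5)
Your proposal is correct and follows essentially the same route as the paper's proof: both isolate the difference $G_\mu$ between $h_{3,\mu}$ and its transformed version via \eqref{complete2}, \eqref{trans4}, and unitarity, then apply the M\"obius substitution $v = \frac{dw+b}{cw+a}$ (your $(M^\sharp)^{-1}$) to map the contour $[-\bar\tau, i\infty)$ onto $[-\overline{M\tau}, -\frac{a}{c}]$, collapse the multipliers using \eqref{start}, \eqref{trans2}, \eqref{trans3}, recombine the integrals into $\int_{-a/c}^{i\infty}$, expand $\wh{h}_\beta(M\tau,-w)$ via \eqref{complete} to produce the $\mathcal{E}_1$ and $\mathcal{E}_2$ terms, and finish with \eqref{etatrans} and \eqref{eq:multiplier}. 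The coefficient bookkeeping (including $(-i(c\tau+d))^{9/2}(c\tau+d)^{-3} = i(-i(c\tau+d))^{3/2}$ and the product $\bigl(-\tfrac{9\sqrt{3}i}{2\sqrt{2}\pi}\bigr)\bigl(-\tfrac{i}{4\sqrt{2}\pi}\bigr) = -\tfrac{9\sqrt{3}}{16\pi^2}$) checks out.
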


\begin{proof}
	Using \eqref{complete2}, \eqref{trans4}, and the unitarity of $\psi_{3,M}$, we find that
	\begin{align}
	&h_{3,\mu} (\t) 
	- (c \t + d)^{-3} \sum_{\nu   \pmod{3}}
	\psi_{3,M} (\nu,\mu) \, 
	h_{3,\nu} \lp \frac{a \t + b}{c \t + d} \rp  
	\notag
	\\
	\label{eq:h3_error}
	&=\frac{9\sqrt{3}i}{2\sqrt{2}\pi}
	\sum_{\a    \pmod{2}}
	\ \left( \ 
	\int\displaylimits_{-\bar{\t}}^{i \infty} \, 
	\frac{ \wh{h}_\a (\t, -w) \, \vartheta_{\frac{2 \mu + 3\a}{6}} (3 w)  }{\lp -i \lp w + \t \rp \rp^{\frac{3}{2}}  }
	d w  \right. 
	 \\
	&\left.\hspace{4cm}
	- (c \t + d)^{-3} 
	\sum_{\nu   \pmod{3}}
	\psi_{3,M} (\nu,\mu)
	\int\displaylimits_{-\frac{a \bar{\t} + b}{c \bar{\t} + d} }^{i \infty} \, 
	\frac{ \wh{h}_\a \lp  \frac{a \t + b}{c \t + d}, -w \rp \, \vartheta_{\frac{2 \nu + 3\a}{6}} (3 w)  }{\lp -i \lp w + \frac{a \t + b}{c \t + d} \rp \rp^{\frac{3}{2}}  }
	d w
	\right).
	\notag
	\end{align}
	
	To simplify, we rewrite the
	first term on the right-hand side of \eqref{eq:h3_error}. For this, we make the change of variables $w\mapsto \frac{dw+b}{cw+a}$ and use the unitary of the multipliers, \eqref{start}, \eqref{trans2}, and \eqref{trans3},
to obtain that
	\begin{align*}
	\widehat{h}_\alpha \left(\t,-\frac{dw+b}{cw+a}\right)&=(c\t+d)^{-\frac32}\sum_{\beta\pmod{2}}\psi_{2,M}(\beta,\alpha)\widehat h_\beta\left(\frac{a\t+b}{c\t+d},-w\right),\\
	\vartheta_{\frac{2 \mu + 3\a}{6}}  (3 \t) 
&=
	(- c \t + d)^{-\frac{1}{2}} 
	\sum_{\nu     \pmod{3}}  
	\sum_{\b     \pmod{2}}
	\psi_{2,M}^* (\b, \a) \,  \psi_{3,M} (\nu, \mu) \,
	\vartheta_{\frac{2 \nu + 3\b}{6}}  \lp 3 \frac{a \t - b}{-c \t + d} \rp .
	\end{align*}
	Plugging these in and simplifying, we find that \eqref{eq:h3_error} equals 
	\begin{equation*}
	- \frac{9\sqrt{3} i}{2\sqrt{2} \pi} (c \t + d)^{-3} 
	\sum_{\nu    \pmod{3}}\psi_{3,M} (\nu,\mu) \, 
	\sum_{\a    \pmod{2}} \ 
	\int\displaylimits_{-\frac{a}{c} }^{i \infty} \, 
	\frac{ \wh{h}_\a \lp  \frac{a \t + b}{c \t + d}, -w \rp \, \vartheta_{\frac{2 \nu + 3\a}{6}} (3 w)  }{\lp -i \lp w + \frac{a \t + b}{c \t + d} \rp \rp^{\frac{3}{2}}  }
	d w,
	\end{equation*}
	Using  \eqref{complete} and \eqref{etatrans} then finishes the claim.
\end{proof}

\section{Eichler integrals}\label{sec4}

In this section, we rewrite the theta integrals, defined in \eqref{defineTint} and \eqref{defineTint2}, as Eichler integrals. Throughout the section, we assume that $\mathrm{Re} (z) >0$ and $h',k \in \IZ$ with $k>0$.
\subsection{The one-dimensional case}

\begin{lem}\label{lem3.1}
	We have
	\begin{equation*}
	\mathcal E_{1,j,-\frac{h'}{k}}\left(\frac{h'}{k}+iz\right) =  \frac{\pi i}{3\sqrt{6}k} \sum_{\substack{r\pmod{6k} \\ r\equiv j\pmod{6}}} \zeta_{12k}^{-h'r^2} \int_{\IR} w g_{\frac{r}{6k}}\left(\frac{w}{2k}\right)e^{-\frac{1}{6} \pi z w^2}dw,
	\end{equation*}
	where, for $c\in\mathbb Q$ and $w\in\mathbb C$, 
\begin{equation}\label{eq:gdefn}
	g_c(w) \coloneqq \frac{\sinh\left(\frac{2\pi w}{3}\right)}{\cosh\left(\frac{2\pi w}{3}\right)-\cos(2\pi c)}.
\end{equation}
\end{lem}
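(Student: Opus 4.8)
The plan is to expand the theta function into its defining exponential sum, straighten the contour so that the Eichler integral collapses to a coset sum of one–dimensional Laplace–type integrals, convert each such integral into a Gaussian integrated against a rational function, and finally resum over the coset with a classical partial–fraction (Poisson) identity that reproduces exactly $g_{\frac{r}{6k}}$.

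First I would write $\vartheta_{\frac{j}{6}}(3w) = \sum_{r\equiv j\,(6)} e^{\frac{\pi i}{6} r^2 w}$ straight from the definition of $\vartheta_\ell$, and parametrize the contour by $w=-\frac{h'}{k}+iy$ with $y\in[0,\infty)$. Since $\t=\frac{h'}{k}+iz$ we get $w+\t=i(y+z)$, so $-i(w+\t)=y+z$ and the rational part of $w$ contributes only the phase $e^{-\frac{\pi i}{6k}h'r^2}=\zeta_{12k}^{-h'r^2}$, giving
\[ \mathcal E_{1,j,-\frac{h'}{k}}\lp\tfrac{h'}{k}+iz\rp = i\sum_{r\equiv j\,(6)}\zeta_{12k}^{-h'r^2}\int_0^\infty\frac{e^{-\frac{\pi}{6}r^2 y}}{(y+z)^{\frac32}}\,dy. \]
Writing $r=s+6kn$ with $s$ running modulo $6k$ (and $s\equiv j\pmod 6$) and $n\in\IZ$, the congruence $r^2\equiv s^2\pmod{12k}$ shows the phase depends only on $s$, so the $r$–sum regroups as $\sum_{s}\zeta_{12k}^{-h's^2}\sum_{n\in\IZ}(\cdots)$.

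The key computational step is the per–term Mordell–type identity. Using the Laplace representation $(y+z)^{-\frac32}=\frac{1}{\Gamma(3/2)}\int_0^\infty \xi^{\frac12}e^{-\xi(y+z)}d\xi$ (valid as $\mathrm{Re}(y+z)>0$), carrying out the elementary $y$–integral, and substituting $\xi=\frac{\pi}{6}w^2$, I expect for each $m=s+6kn$
\[ \int_0^\infty\frac{e^{-\frac{\pi}{6}m^2 y}}{(y+z)^{\frac32}}\,dy = \frac{2}{\sqrt6}\int_{\IR}\frac{w^2}{w^2+m^2}\,e^{-\frac{\pi}{6}z w^2}\,dw. \]
Summing over $n$ and interchanging with the $w$–integral, I would then invoke the classical identity $\sum_{n\in\IZ}\frac{1}{(n+a)^2+b^2}=\frac{\pi}{b}\frac{\sinh(2\pi b)}{\cosh(2\pi b)-\cos(2\pi a)}$ with $a=\frac{s}{6k}$, $b=\frac{w}{6k}$, which after matching arguments in \eqref{eq:gdefn} gives $\sum_{n}\frac{1}{w^2+(s+6kn)^2}=\frac{\pi}{6kw}\,g_{\frac{s}{6k}}\!\lp\frac{w}{2k}\rp$. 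Collecting the constants $i\cdot\frac{2}{\sqrt6}\cdot\frac{\pi}{6k}=\frac{\pi i}{3\sqrt6\,k}$ and renaming $s\mapsto r$ reproduces the claimed formula.

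The main obstacle I anticipate is not any single computation but the rigorous justification of the two interchanges of summation and integration. Near the lower endpoint the theta series fails to converge and the integrand inherits an integrable $y^{-\frac12}$ singularity; the cleanest way to legitimize Fubini/Tonelli is to use $|y+z|\ge y+\mathrm{Re}(z)$ and observe that $\sum_{r}\int_0^\infty e^{-\frac{\pi}{6}r^2 y}\,(y+\mathrm{Re}(z))^{-\frac32}\,dy=O\big(\sum_r r^{-2}\big)<\infty$, so the $r$–sum converges absolutely even though the series itself does not at the cusp. A parallel dominated–convergence bound, using the uniform decay of the Gaussian and the convergence of the $g$–series, handles the interchange of the $n$–sum with the $w$–integral in the final step.
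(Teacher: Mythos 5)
Your proof is correct and follows essentially the same route as the paper: expand the theta function into its Fourier series, convert each $(y+z)^{-3/2}$ Laplace-type integral into a Gaussian integral against a rational function (your per-term identity is precisely the paper's Mordell-type identity with its odd part removed and rescaled), and resum the coset $r\equiv s\pmod{6k}$ by the cotangent partial-fraction identity. The only cosmetic difference is that you symmetrize early, replacing $\frac{w}{w-im}$ by its even part $\frac{w^2}{w^2+m^2}$, so the closed form of $\sum_{n}\left((n+a)^2+b^2\right)^{-1}$ produces $g_{\frac{r}{6k}}$ directly, whereas the paper first forms a cotangent and then discards the odd term of its decomposition at the end.
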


\begin{proof}
	Lemma \ref{lem3.1} is well-known to experts, however, for the convenience of the reader, we give a proof.
	Plugging in definition \eqref{defineTint}, we rewrite 
	\begin{align*}
	\mathcal E_{1,j,-\frac{h'}{k}}\left(\frac{h'}{k}+iz\right)= \int_0^{i\infty} \frac{\vartheta_{\frac{j}{6}}\left(3\left(w-\frac{h'}{k}\right)\right)}{\left(-i\left(iz+w\right)\right)^{\frac32}}dw.
	\end{align*}
	We next assume that $z>0$ and argue via analytic continuation.
	Letting $w=it$, using
	the identity
	\begin{align*}
	\int_{\mathbb R}e^{-2\pi w^2v}\frac{w}{w-is}dw&=\frac{1}{2\sqrt{2}}\int_0^\infty\frac{e^{-2\pi ts^2}}{(t+v)^\frac32}dt,
	\end{align*}
	and inserting the Fourier expansion of $\vartheta_{\frac{j}{6}}$,
	we obtain 
	\begin{align*}
	\mathcal E_{1,j,-\frac{h'}{k}} \left(\frac{h'}{k}+iz\right) &=\frac{\sqrt{2} i}{\sqrt{3}} \sum_{\substack{ r\pmod{6k} \\ r\equiv j \pmod{6}}} \zeta_{12k}^{-h'r^2} \sum_{\substack{ m\in \IZ \\ m\equiv r\pmod{6k}}} \int_{\IR} \frac{we^{-\frac{1}{6} \pi z w^2}}{w-im} dw.
	\end{align*}
	Using
	\begin{equation}\label{cotid}
	\pi \cot(\pi x) = \lim_{M\to \infty}\sum_{m=-M}^M\frac{1}{x+m},
	\end{equation}
	we may then show that
	\begin{align*}
	\mathcal E_{1,j,-\frac{h'}{k}}\left(\frac{h'}{k}+iz\right) = -\frac{\pi}{3\sqrt{6}k} \sideset{}{^{}}\sum_{\substack{r\pmod{6k} \\ r\equiv j \pmod{6}}} \zeta_{12k}^{-h'r^2} \int_\IR we^{-\frac{1}{6}\pi z w^2} \cot\left(\pi\frac{iw+r}{6k}\right)dw.
	\end{align*}
	The claim of the Lemma follows, using
	\begin{equation}\label{cotid2}
	\cot(x+iy) = \frac{\sin(2x)}{\cosh(2y)-\cos(2x)} - i \frac{\sinh(2y)}{\cosh(2y)-\cos(2x)}
	\end{equation}
	and the fact that
	the contribution of the first term vanishes.
\end{proof}

\subsection{The two-dimensional case}
The main goal of this section is to write the two-dimensional Eichler integral as a Mordell integral. Such integrals were first found by Kaszian, Milas, and the first author in \cite{BKM}.
To state the main result,
we define the function $g_{k, \bf{r}} : \IR^2 \to \IR$ as follows. Set 
\begin{equation}\label{eq:fdefn}
f_c (w) := \frac{\sin(2\pi c)}{\cosh \left(\frac{2\pi w}{3}\right) -\cos(2\pi c)},
\end{equation}
and write here and throughout this paper, vectors as $\boldsymbol{z}=:(z_1,z_2)$.

	If $r_1,r_2\not\equiv 0 \pmod{3k}$, then we define 
	\begin{equation}\label{gnoto}
	g_{k,\boldsymbol{r}} (\boldsymbol{w}) \coloneqq 
	\left(w_1^2+w_2^2+4w_1w_2\right) 
	\left(g_{\frac{r_1}{3k}}\left(\frac{w_1}{k}\right)g_{\frac{r_2}{3k}}\left(\frac{w_2}{k}\right)-f_{\frac{r_1}{3k}}\left(\frac{w_1}{k}\right)f_{\frac{r_2}{3k}}\left(\frac{w_2}{k}\right)\right).
	\end{equation}
	
	If $r_1 \equiv 0, r_2 \not\equiv 0 \pmod{3k}$, then we let
	\begin{align}\label{eq:g_r1zero}
	g_{k,(0,r_2)} (\boldsymbol{w}) \coloneqq \left(w_1^2+w_2^2+4w_1w_2\right)g_0\left(\frac{w_1}{k}\right)g_{\frac{r_2}{3k}}\left(\frac{w_2}{k}\right)-\frac{3k}{\pi w_1}\left(w_2+\frac{w_1}{2}\right)^2g_{\frac{r_2}{3k}}\left(\frac{w_2+\frac{w_1}{2}}{k}\right).
	\end{align}
	
	If $r_1 \not\equiv 0, r_2 \equiv 0 \pmod{3k}$, then set 
	\begin{equation}\label{eq:g_r2zero}
	g_{k,(r_1,0)}  \coloneqq g_{k,(0,r_1)} , 
	\end{equation}
	
	Finally, if $r_1,r_2 \equiv 0  \pmod{3k}$, then 
	\begin{align}\label{eq:g_r12zero}
	g_{k,\boldsymbol{0}} (\boldsymbol{w}) \coloneqq \left(w_1^2+w_2^2+4w_1w_2\right)g_0\left(\frac{w_1}{k}\right)g_{0}\left(\frac{w_2}{k}\right)-\frac{3k}{\pi w_1}\left(w_2+\frac{w_1}{2}\right)^2g_{0}\left(\frac{w_2+\frac{w_1}{2}}{k}\right)\\\notag-\frac{3k}{\pi w_2}\left(w_1+\frac{w_2}{2}\right)^2g_{0}\left(\frac{w_1+\frac{w_2}{2}}{k}\right).
	\end{align}

\begin{thm}\label{thm2dim}
	We have, with $\boldsymbol{dw}:=dw_1dw_2$
	\begin{align*}
	&\mathcal{E}_{2,\nu,-\frac{h'}{k}} \lp \frac{h'}{k}  + i z \rp
	= -\frac{2\pi^2}{27 \sqrt{3}k^2} 
	\quad
	\sum_{\substack{ r_1, r_2 \pmod{3k} \\ r_1 \equiv r_2 + \nu \pmod{3} }}
	\zeta_{3k}^{-h'Q(\boldsymbol{r})}
	\int\displaylimits_{\IR^2}
	g_{k,\boldsymbol{r}} (\boldsymbol{w}) 
	e^{-\frac{2}{3} \pi z Q(\boldsymbol{w})}
	\boldsymbol{dw}.
	\end{align*}
\end{thm}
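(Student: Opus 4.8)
The plan is to mirror the strategy of Lemma~\ref{lem3.1}, now carried out for the nested double integral \eqref{defineTint2}, and to extract the two-dimensional Mordell representation following the method of \cite{BKM}. First I would set $\t=\frac{h'}{k}+iz$, shift both integration variables by $-\frac{h'}{k}$ so that the phase is absorbed into the theta arguments, and rotate the contours to the imaginary axis via $w_1=it_1$, $w_2=it_2$. Because of the nesting $w_2\in(w_1,i\infty)$, this turns the domain into the wedge $0<t_1<t_2<\infty$, and the two denominators become $(t_1+z)^{\frac32}(t_2+z)^{\frac32}$. Inserting the Fourier expansions of $\vartheta_{\frac{\a}{2}}$ and $\vartheta_{\frac{2\nu+3\a}{6}}$ produces the Gaussian weights $e^{-6\pi n_1^2 t_1-2\pi n_2^2 t_2}$ together with the phase $e^{-2\pi i(3n_1^2+n_2^2)h'/k}$, where $n_1\in\frac{2\nu+3\a}{6}+\IZ$ and $n_2\in\frac{\a}{2}+\IZ$.

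The technical heart, which I expect to be the main obstacle, is a two-dimensional analogue of the integral identity used in Lemma~\ref{lem3.1}: one must convert
\[
\int_0^\infty\int_{t_1}^\infty \frac{e^{-2\pi s_1^2 t_1-2\pi s_2^2 t_2}}{(t_1+z)^{\frac32}(t_2+z)^{\frac32}}\,dt_2\,dt_1
\]
into a genuinely two-dimensional Gaussian integral over $\IR^2$. The constraint $t_1<t_2$ coming from the nesting is exactly what couples the two variables, and after the change of variables it is responsible both for the cross term $w_1w_2$ in the quadratic form $Q$ appearing in $e^{-\frac23\pi z Q(\boldsymbol w)}$ and for the quadratic prefactor $w_1^2+w_2^2+4w_1w_2$ in \eqref{gnoto}. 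Following \cite{BKM}, I would introduce two auxiliary Gaussian variables (rather than the single one of the one-dimensional case), so that the product of rational factors $\frac{w_1}{w_1-is_1}\frac{w_2}{w_2-is_2}$ emerges after matching $s_1=\sqrt3\,n_1$ and $s_2=n_2$. Controlling convergence over the wedge and justifying the interchange of the $t$-integration with the auxiliary integrals is the delicate point here.

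Next I would interchange the sum over $(n_1,n_2)$ with the integral and reorganize it. The linear substitution $r_1=3n_1-n_2$, $r_2=2n_2$ (up to the sign conventions) turns $3n_1^2+n_2^2$ into $\frac13 Q(r_1,r_2)$, so that the phase becomes $\zeta_{3k}^{-h'Q(\boldsymbol r)}$; moreover, since $n_1-n_2\in\frac{\nu}{3}+\IZ$, it enforces exactly $r_1\equiv r_2+\nu\pmod{3}$, while summing over $\a\pmod{2}$ fills in both parities of $r_2$, so that $(r_1,r_2)$ runs over all of $\IZ^2$ subject to this congruence. Splitting $(r_1,r_2)$ into residue classes $\pmod{3k}$ and carrying out the remaining lattice sum by the cotangent identity \eqref{cotid} in each variable, followed by \eqref{cotid2}, produces products of the functions $g_c$ and $f_c$ from \eqref{eq:gdefn} and \eqref{eq:fdefn}. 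Exactly as the $\sin(2x)$ piece dropped out in the one-dimensional argument, here only the real part of the product of the two $\cot$-evaluations survives, i.e.\ the combination $g_{\frac{r_1}{3k}}g_{\frac{r_2}{3k}}-f_{\frac{r_1}{3k}}f_{\frac{r_2}{3k}}$ of \eqref{gnoto}.

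Finally I would treat the degenerate residue classes separately: when $r_1\equiv0$ and/or $r_2\equiv0\pmod{3k}$, the partial-fraction sum \eqref{cotid} acquires a pole at the origin, and taking the principal-value limit carefully produces the extra terms of the form $\frac{3k}{\pi w_1}\big(w_2+\frac{w_1}{2}\big)^2 g_{\frac{r_2}{3k}}$ appearing in \eqref{eq:g_r1zero}, \eqref{eq:g_r2zero}, and \eqref{eq:g_r12zero}. Collecting the constants produced by the two applications of the integral identity and rescaling the Gaussian variables $w_1,w_2$ so that their arguments match $\frac{w_1}{k},\frac{w_2}{k}$ in $g_{k,\boldsymbol r}$ then yields the overall prefactor $-\frac{2\pi^2}{27\sqrt3 k^2}$ and the Gaussian $e^{-\frac23\pi z Q(\boldsymbol w)}$, completing the proof.
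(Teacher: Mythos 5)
Your overall skeleton (rotate contours, insert the theta Fourier expansions, convert to a Mordell-type integral, perform the lattice sum with \eqref{cotid} and \eqref{cotid2}, treat the degenerate residue classes separately) matches the paper's proof, but the step you yourself flag as the technical heart is resolved incorrectly, and this is a genuine gap. You propose to convert, for each fixed $(n_1,n_2)$, the nested integral into a two-dimensional Gaussian integral with the product kernel $\frac{w_1}{w_1-is_1}\cdot\frac{w_2}{w_2-is_2}$. No such pointwise identity exists. The paper's Lemma \ref{lem:integral_identity} equates the nested integral $G(\boldsymbol{x})$ with a planar integral $F(\boldsymbol{x})$ whose kernel has numerator $(w_1+w_2)^2$ (not $w_1w_2$) and denominators involving the linear forms $2x_2-x_1$ and $2x_1-x_2$, and it does so \emph{only after averaging both sides over the orbit of the Weyl group of $A_2$} generated by $\iota_1,\iota_2$. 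The averaging is unavoidable: $F$ is invariant under $\iota_1$ (substitute $w_j\mapsto -w_j$ and then swap $w_1\leftrightarrow w_2$), whereas $G(\boldsymbol{x})$ depends on the pair $\lp x_2^2,(2x_1-x_2)^2\rp$, which $\iota_1$ sends to $\lp x_1^2,(2x_2-x_1)^2\rp$; hence $F\neq G$ pointwise, and no term-by-term identity of the kind you describe can hold. A fully factored kernel would moreover decouple the two variables and, after the cotangent summation, produce a prefactor $w_1w_2$ rather than $w_1^2+w_2^2+4w_1w_2$: the nesting $t_1<t_2$ genuinely obstructs such a factorization. Establishing the averaged identity occupies most of Section \ref{sec4} of the paper (a differential-equation argument in an auxiliary parameter, Gaussian evaluations, and repeated integration by parts), and your sketch neither supplies it nor replaces it with anything that works.

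Two smaller inaccuracies compound this. First, after the averaged identity one does not get a single lattice sum to which \eqref{cotid} can be applied ``in each variable'': one gets three families of rational terms which, as the paper's footnote stresses, form only conditionally convergent double series; they cannot be separated and summed independently, and the third family must first be split by partial fractions and re-parametrized before \eqref{cotid} applies. Second, in the degenerate classes the difficulty is not a pole of the cotangent sum but the non-integrable singularity of $g_0$ at the origin, which forces the regularized integral and the specific subtractions appearing in \eqref{eq:g_r1zero}--\eqref{eq:g_r12zero}. These two points are repairable; the missing averaged identity is not, and without it your argument does not reach the theorem.
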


Before proving Theorem \ref{thm2dim}, we require an auxiliary lemma.
For this, we introduce two involutions $\iota_1$ and $\iota_2$ acting on $\mathbb R^2$ that leave the quadratic form $Q^*(x_1,x_2) \coloneqq x_1^2  +x_2^2 - x_1 x_2$ for $x_1, x_2 \in \IR$ invariant, namely
\begin{equation*}
\iota_1:(x_1, x_2) \mapsto (-x_2,-x_1)
\andd
\iota_2:(x_1, x_2) \mapsto (x_2-x_1,x_2).
\end{equation*}
These two involutions are the generators of the Weyl group for the root lattice $A_2$, which is isomorphic to the symmetric group $S_3$. We average a function $h: \IR^2 \to \IC$ over the orbit of a point $(x_1, x_2)$ under the group generated by the involutions $\iota_1$ and $\iota_2$, namely
\begin{align*}
(x_1, x_2) 
&\xmapsto{\iota_1} (-x_2, -x_1)
\xmapsto{\iota_2} (x_2-x_1, -x_1)
\xmapsto{\iota_1} (x_1, x_1-x_2)
\xmapsto{\iota_2} (-x_2, x_1-x_2)\\
&\xmapsto{\iota_1} (x_2-x_1, x_2)
\xmapsto{\iota_2} (x_1, x_2).
\end{align*}
We then define the average of $h$ as
\begin{align*}
\sideset{}{^a}\sum_{\boldsymbol{x}} h(\boldsymbol{x}) \coloneqq
\frac{1}{6}
\Big(
&h(x_1, x_2)  + h(-x_2, -x_1) + h(x_2-x_1, -x_1) 
\\
&\quad
+ h(x_1, x_1-x_2)   + h(-x_2, x_1-x_2) + h(x_2-x_1, x_2)
\Big).
\notag
\end{align*}
Note that
\begin{equation*}\label{av2}
\sideset{}{^a}\sum_{\boldsymbol{x}} h(\boldsymbol{x}) =
\frac{1}{2}
\sideset{}{^a}\sum_{\boldsymbol{x}}\lp h(x_1,x_2) + h(-x_2, -x_1) \rp
=
\frac{1}{2}
\sideset{}{^a}\sum_{\boldsymbol{x}} \lp h(x_1,x_2) + h(x_2-x_1, x_2) \rp.
\end{equation*}
We also define (excluding $2x_2-x_1,2x_1-x_2=0$)
\begin{align*}
F(\boldsymbol{x})=F(\boldsymbol{x};z) &\coloneqq \int\displaylimits_{\IR^2} 
\frac{(w_1 + w_2)^2 e^{-\frac{2}{3}\pi z Q(\boldsymbol{w}) } }
{\lp w_1 - i (2x_2 -x_1) \rp \lp w_2 - i (2x_1 -x_2) \rp} \boldsymbol{dw},\\
G(\boldsymbol{x})=G(\boldsymbol{x};z) &\coloneqq \frac{\sqrt{3}}{2} 
\int\displaylimits_0^\infty
\frac{e^{-\frac{3}{2} \pi x_2^2 w_1 }}{\lp w_1 + z \rp^{\frac{3}{2}}} 
\int\displaylimits_{w_1}^\infty
\frac{e^{-\frac{1}{2}\pi (2x_1-x_2)^2 w_2 }}{\lp w_2 +z \rp^{\frac{3}{2}}}  
d w_2 dw_1.
\end{align*} 
These functions agree when averaged.

\begin{lem}\label{lem:integral_identity}
	Let $x_1, x_2 \in \IR$ with $2x_2 - x_1 \neq 0$, $2x_1 - x_2 \neq 0$,  and
	$x_1 + x_2 \neq 0$. Then we have
	\begin{equation}\label{FG}
	\sideset{}{^a}\sum_{\boldsymbol{x}} F(\boldsymbol{x}) 
	=
	\sideset{}{^a}\sum_{\boldsymbol{x}} G(\boldsymbol{x}) .
	\end{equation}
	We have, as $x_1^2+x_2^2\to\infty$, 
	\begin{equation}\label{boundsG}
	G(\boldsymbol{x}) \ll \begin{cases}
	x_2^{-2}(2x_1-x_2)^{-2}\quad&\textnormal{ if } x_2,2x_1-x_2\neq 0,\\
	x_2^{-2}\quad &\textnormal{ if } 2x_1-x_2=0, x_2\neq 0,\\
	(2x_1-x_2)^{-2}\quad&\textnormal{ if } x_2=0, x_1\neq 0.
	\end{cases}
	\end{equation}
\end{lem}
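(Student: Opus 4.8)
The plan is to establish the two assertions separately, with \eqref{FG} being the substantial one. Throughout write $\ell_1:=2x_2-x_1$, $\ell_2:=2x_1-x_2$, $\ell_3:=x_1+x_2$, and note the relation $\ell_3=\ell_1+\ell_2$. These are, up to sign, the three positive roots of $A_2$: a direct computation shows $\iota_1$ sends $(\ell_1,\ell_2,\ell_3)\mapsto(-\ell_2,-\ell_1,-\ell_3)$ and $\iota_2$ sends $(\ell_1,\ell_2,\ell_3)\mapsto(\ell_3,-\ell_2,\ell_1)$, so both reflections permute $\{\pm\ell_1,\pm\ell_2,\pm\ell_3\}$ and hence $\sideset{}{^a}\sum_{\boldsymbol{x}}$ symmetrizes completely over the three root directions. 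This also explains the hypotheses $\ell_1,\ell_2,\ell_3\neq 0$: although $F$ itself only has poles at $w_j=i\ell_j$ for $j=1,2$, its Weyl images acquire poles at $w=i\ell_3$ as well, so all three linear forms must be nonzero for $\sideset{}{^a}\sum_{\boldsymbol{x}}F$ to be defined.

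For \eqref{FG} I would reduce the two-dimensional Gaussian integral $F$ to ordered iterated integrals over the simplex $0<w_1<w_2$ and then compare with $G$. First diagonalize the quadratic form via $Q(\boldsymbol{w})=(w_1+\tfrac{w_2}{2})^2+\tfrac34 w_2^2$, so that after $u=w_1+\tfrac{w_2}{2}$ the exponent factorizes as $e^{-\frac23\pi z u^2}e^{-\frac12\pi z w_2^2}$ and the numerator becomes $(u+\tfrac{w_2}{2})^2$. Expanding by partial fractions, $\frac{(u+\frac{w_2}{2})^2}{u-s}=(u-s)+2\!\left(s+\tfrac{w_2}{2}\right)+\frac{(s+\frac{w_2}{2})^2}{u-s}$ with $s=\tfrac{w_2}{2}+i\ell_1$, splits the $u$-integral into a polynomial-times-Gaussian piece and a genuine pole piece. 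To each pole piece in each variable I apply the one-dimensional identity
\[
\int_{\IR} e^{-2\pi w^2 v}\,\frac{w}{w-is}\,dw=\frac{1}{2\sqrt2}\int_0^\infty\frac{e^{-2\pi t s^2}}{(t+v)^{\frac32}}\,dt
\]
from the proof of Lemma~\ref{lem3.1} (together with the elementary $\int_\IR e^{-2\pi w^2v}dw=(2v)^{-1/2}$ for the constant-numerator case), which converts a pole at $w_j=i\ell_j$ against a Gaussian weight into an integral over $(0,\infty)$ of the weight-$\tfrac32$ kernel $(t+v)^{-3/2}$ with a Laplace factor in $\ell_j^2$. Iterating in both variables produces ordered double integrals of exactly the type appearing in $G$, together with lower-order ``polynomial'' remainders.

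The main obstacle is that these remainders do not vanish individually: each is a rank-one object attached to a single root (one variable having been resolved by the polynomial part of the partial fraction rather than by the pole). I would show that they cancel across the six-element $S_3$-orbit in $\sideset{}{^a}\sum_{\boldsymbol{x}}$, the relation $\ell_3=\ell_1+\ell_2$ being precisely what makes the single-root contributions of the different Weyl images match up and cancel. What then survives the average is the fully iterated integral, which reorganizes into $\sideset{}{^a}\sum_{\boldsymbol{x}}G(\boldsymbol{x})$. The delicate bookkeeping here is twofold: tracking these cancellations, and fixing the correct contour/branch (governed by the signs of $\ell_1,\ell_2,\ell_3$, and by the shift of the Gaussian center away from the pole caused by completing the square) when invoking the one-dimensional identity for complex pole location.

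For the bound \eqref{boundsG} I would estimate $G$ directly, exploiting Gaussian concentration; note this assertion concerns $G$ alone, so the degenerate cases $2x_1-x_2=0$ or $x_2=0$ are permitted even though they are excluded for \eqref{FG}. Since $w_1,w_2>0$ and $\IRe(z)>0$ we have $|(w_j+z)^{-3/2}|\le(w_j+\IRe(z))^{-3/2}$. In the generic case $x_2,2x_1-x_2\neq 0$, bounding the inner integral by $\int_{w_1}^\infty(w_2+\IRe(z))^{-3/2}e^{-\frac12\pi(2x_1-x_2)^2w_2}dw_2\le \frac{2}{\pi(2x_1-x_2)^2}(w_1+\IRe(z))^{-3/2}$ and then the outer integral by $(\IRe(z))^{-3}\int_0^\infty e^{-\frac32\pi x_2^2 w_1}dw_1=\frac{2}{3\pi x_2^2}(\IRe(z))^{-3}$ yields $G\ll x_2^{-2}(2x_1-x_2)^{-2}$. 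The two degenerate cases are immediate specializations: if $2x_1-x_2=0$ the inner integral equals $2(w_1+z)^{-1/2}$, so $G=\sqrt3\int_0^\infty(w_1+z)^{-2}e^{-\frac32\pi x_2^2 w_1}dw_1\ll x_2^{-2}$; and if $x_2=0$ the outer Gaussian is trivial and the inner estimate alone gives $G\ll(2x_1-x_2)^{-2}$. This part is routine and poses no real difficulty.
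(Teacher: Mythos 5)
Your handling of \eqref{boundsG} is adequate (the paper itself dismisses this part as ``direct''), and you have correctly identified the structural skeleton: the three linear forms $\ell_1=2x_2-x_1$, $\ell_2=2x_1-x_2$, $\ell_3=x_1+x_2$ are permuted up to sign by $\iota_1,\iota_2$, which explains the hypotheses and the role of the averaging. But for \eqref{FG} your plan has two genuine gaps, and they sit exactly where the content of the lemma lies.

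First, the reduction step fails as described. The one-dimensional identity from the proof of Lemma \ref{lem3.1} needs the pole on the imaginary axis and a \emph{centered} Gaussian multiplying it. In the original variables the poles are at $i\ell_1,i\ell_2$, but the cross term $e^{-\frac{2}{3}\pi z w_1w_2}$ blocks the identity; after your diagonalization $u=w_1+\frac{w_2}{2}$ the Gaussian is centered, but the pole moves to $u=\frac{w_2}{2}+i\ell_1$, i.e.\ one must take $s=\ell_1-\frac{iw_2}{2}$ in the identity, and then $\mathrm{Re}\left(s^2\right)=\ell_1^2-\frac{w_2^2}{4}<0$ on the unbounded region $|w_2|>2|\ell_1|$, where the right-hand side $\int_0^\infty e^{-2\pi ts^2}(t+v)^{-\frac32}dt$ \emph{diverges}. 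Nor can you rescue this by performing the $w_2$-integral first: the integrand of the combined $(t,w_2)$-integral grows like $e^{\frac{\pi}{2}tw_2^2}$ against the fixed weight $e^{-\frac12\pi \mathrm{Re}(z)w_2^2}$, so absolute convergence fails once $t>\mathrm{Re}(z)$ and Fubini is unavailable. This is not the contour/branch ``bookkeeping'' you allude to; the representation you want to insert simply does not exist pointwise on most of the domain, and a different mechanism is required. Second, even granting the reduction, the cancellation of the single-root remainder terms across the $S_3$-orbit -- which you only assert (``I would show that they cancel'') -- is the actual theorem; the relation $\ell_3=\ell_1+\ell_2$ by itself does not produce it, and no computation is offered.

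For comparison, the paper circumvents both obstacles by a deformation argument rather than a direct reduction: after the change of variables $w_1\mapsto\frac{w_1-w_2}{2}$ it inserts a parameter $t$ into the phase, writes $F$ as a second $\mathfrak{z}$-derivative of an auxiliary integral $\mathcal F_{\boldsymbol{x}}(\mathfrak z,t)$ at $t=1$, and studies the averaged quantity $\mathcal H(t)$. Its $t$-derivative splits into two terms that can be evaluated as genuine Gaussian integrals and that are exchanged by $\iota_1$, so $\mathcal H'(t)$ has a closed form; integrating back from $t=\infty$, where $\mathcal H$ vanishes, yields the averaged identity \eqref{averageF}. The leftover terms are then eliminated by the explicit computation of $F_0''(0)$ together with the averaging identities $\sideset{}{^a}\sum_{\boldsymbol{x}}\left(x_2^2-x_1^2\right)=0$ and $\sideset{}{^a}\sum_{\boldsymbol{x}}(2x_1-x_2)^2=2Q^*(\boldsymbol{x})$ and two integrations by parts. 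Your proposal would need an analogous device (or a proof of the asserted cancellation and a repaired convergence argument) before it constitutes a proof of \eqref{FG}.
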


\begin{proof}
	The bounds in \eqref{boundsG} are direct, thus we only prove \eqref{FG}.
	Via analytic continuation, it is enough to show this identity for $z\in\mathbb R^+$, which we assume from now on. We first claim that
	\begin{align}\label{averageF}
	\sideset{}{^a}\sum_{\boldsymbol{x}} F(\boldsymbol{x}) 
	=&
	\frac{\sqrt{3}}{z} \sideset{}{^a}\sum_{\boldsymbol{x}}\left[
	\frac{\del^2}{\del \mathfrak{z}^2}  \Bigg(
	(x_2 + \mathfrak{z}) (2 x_1 - x_2 +\mathfrak{z})e^{2 \pi z Q^*(\boldsymbol{x})+2 \pi z  (x_1 + x_2)\mathfrak{z}}\right.
	\\
	&\qquad\qquad\qquad\times\left.
	\int\displaylimits_1^\infty
	e^{- \frac{3}{2}\pi z (x_2 + \mathfrak{z})^2 w_1^2}
	\int\displaylimits_{w_1}^\infty
	e^{- \frac{1}{2} \pi z (2 x_1 - x_2 +\mathfrak{z})^2 w_2^2 } 
	d w_2
	d w_1
	\Bigg)\right]_{\mathfrak{z}=0}.
	\notag
	\end{align} 
	To prove \eqref{averageF}, we use the change of variables $w_1 \mapsto \frac{w_1 - w_2}{2}$ in the definition of $F(\boldsymbol{x})$, to rewrite
	\begin{align*}
	F(\boldsymbol{x})  &= -
	\frac{1}{4\pi^2 z^2} 
	e^{2\pi zQ^*(\boldsymbol{x})}\left[\frac{\partial^2}{\partial \zz^2}\left(e^{2\pi z(x_1+x_2)\zz} \mathcal F_{\boldsymbol{x}}(\zz,1)\right)\right]_{\zz=0},
	\end{align*}
	where
	\begin{align*}
	\mathcal F_{\boldsymbol{x}}(\zz,t) :=
	\int\displaylimits_{\IR^2 - i \, \pmat{3 x_2 \\ 2 x_1 - x_2}} 
	\frac{e^{-\frac{1}{6} \pi z w_1^2 -\frac{1}{2} \pi z w_2^2- \pi i z t \lp (x_2 +\mathfrak{z}) (w_1-w_2) + 2 (x_1 +\mathfrak{z}) w_2  \rp}}
	{\lp w_1 - w_2 \rp  w_2 } \boldsymbol{dw}.
	\end{align*}
	Define
	\[
	\mathcal H(t):=\left[\frac{\partial^2}{\partial \mathfrak z^2}\sideset{}{^a}\sum_{\boldsymbol{x}} e^{2\pi zQ^*(\boldsymbol{x})+2\pi z (x_1+x_2)\mathfrak{z}}\mathcal F_{\boldsymbol{x}}(\zz,t)\right]_{\mathfrak z=0}.
	\]	
	We identify $\mathcal H(t)$ by determining its derivative and its limiting behavior.
	We first compute
	\begin{align}
	&\frac{\partial}{\partial t}\mathcal F_{\boldsymbol{x}}(\zz,t)=
	- \pi i z (x_2 + \mathfrak{z})
	\int\displaylimits_{\IR^2 -i \, \pmat{3 x_2 \\ 2 x_1 - x_2} }  
	e^{-\frac{1}{6}\pi z w_1^2 -\frac{1}{2}\pi z w_2^2- \pi i z t \lp (x_2 +\mathfrak{z}) (w_1-w_2) + 2 (x_1 +\mathfrak{z}) w_2  \rp} 
	\frac{1}{w_2}
	\boldsymbol{dw} 
	\notag
	\\
	& \ \ 
	- 2 \pi i z (x_1 + \mathfrak{z})
	\int\displaylimits_{\IR^2 -i \, \pmat{3 x_2 \\ 2 x_1 - x_2}}  
	e^{-\frac{1}{6} \pi z w_1^2 -\frac{1}{2} \pi z w_2^2- \pi i z t \lp (x_2 +\mathfrak{z}) (w_1-w_2) + 2 (x_1 +\mathfrak{z}) w_2  \rp} 
	\frac{1}{w_1-w_2}
	\boldsymbol{dw} .
	\label{tderint}
	\end{align}
	Evaluating the integral in $w_1$ as Gaussian, one can show that the first term in \eqref{tderint} equals
	\begin{align}\label{first}
	- \pi i \sqrt{6 z} (x_2 + \mathfrak{z})  e^{- \frac{3}{2} \pi z(x_2 + \mathfrak{z})^2 t^2} \mathcal G_{\boldsymbol{x}}(\zz,t),
	\end{align}
	where 
	\[
	\mathcal G_{\boldsymbol{x}}(\zz,t):=\qquad\int\displaylimits_{\mathclap{\IR - i (2 x_1 - x_2)}}  
	\quad \ e^{ -\frac{1}{2} \pi z w_2^2- \pi i z t (2 x_1 - x_2 +\mathfrak{z})w_2 } 
	\frac{d w_2}{w_2}.
	\]
	
For the second term on the right-hand side of \eqref{tderint} we change variables and take a Gaussian integral to show that it equals
	\begin{equation}\label{GaussIntEvasecond}
	- \pi i \sqrt{6 z} (x_1 + \mathfrak{z})  e^{- \frac{3}{2} \pi z (x_1 + \mathfrak{z})^2 t^2}
	\qquad\int\displaylimits_{\mathclap{\IR - i (2 x_2 - x_1)}}  \quad \
	e^{ -\frac{1}{2} \pi z w_2^2- \pi i z t  (2 x_2 - x_1 +\mathfrak{z}) w_2} 
	\frac{d w_2}{w_2} .
	\end{equation}
	Note that \eqref{first} and \eqref{GaussIntEvasecond} are mapped to each other when applying the involution $\iota_1$ and changing $\mathfrak z$ into $-\mathfrak z$ and $w_2$ into $-w_{2}$; note that the prefactor $e^{2 \pi z Q^*(\boldsymbol{x})+2 \pi z \mathfrak{z} (x_1 + x_2)}$ is invariant under these exchanges. 
	Thus we obtain
	\begin{equation}\label{idHp}
	\mathcal H'(t)=-2\pi i\sqrt{6z}\left[\frac{\partial^2}{\partial \mathfrak z^2}\sideset{}{^a}\sum_{\boldsymbol x}(x_2+\mathfrak z)e^{-\frac{3}{2} \pi z(x_2+\mathfrak z)^2t^2+2\pi zQ^*(\boldsymbol{x})+2\pi z(x_1+x_2)\zz}\mathcal G_{\boldsymbol{x}}(\zz,t)\right]_{\mathfrak z=0}.
	\end{equation}
	It is not hard to show that
	\[
	\mathcal G_{\boldsymbol{x}}(\zz,t)=\pi i (2x_1-x_2+\mathfrak z)\sqrt{2z}\int_t^\infty e^{-\frac{1}{2} \pi z(2x_1-x_2+\mathfrak z)^2w^2}dw.
	\]
	Plugging this into \eqref{idHp} gives that
	\begin{multline*}
	\mathcal H'(t)=4\sqrt{3}\pi^2z\left[\frac{\partial^2}{\partial\mathfrak z^2}\sideset{}{^a}\sum_{\boldsymbol{x}}(x_2+\mathfrak z)(2x_1-x_2+\mathfrak z)e^{-\frac{3}{2} \pi z (x_2+\mathfrak z)^2t^2+2\pi zQ^*(\boldsymbol{x})+2\pi z(x_1+x_2)\mathfrak z}\right.\\
	\hspace{4cm} \left. \times\int_t^\infty e^{-\frac{1}{2} \pi z (2x_1-x_2+\mathfrak z)^2w^2}dw\right]_{\mathfrak z=0}.
	\end{multline*}
	Using that
	$\lim_{t\rightarrow\infty}\mathcal H(t)=0$, we then obtain
	\begin{multline*}
	\mathcal H(t)=-4\sqrt{3}\pi^2z\left[\frac{\partial^2}{\partial\mathfrak z^2}\sideset{}{^a}\sum_{\boldsymbol{x}}(x_2+\mathfrak z)(2x_1-x_2+\mathfrak z)e^{2\pi zQ^*(\boldsymbol{x})+2\pi z(x_1+x_2)\zz}\right.\\
	\left.\times\int_t^\infty e^{-\frac{3}{2}\pi z(x_2+\mathfrak z)^2w_1^2}\int_{w_1}^\infty e^{-\frac{1}{2} \pi z(2x_1-x_2+\mathfrak z)^2w_2^2}dw_2dw_1\right]_{\mathfrak z=0}.
	\end{multline*}
	Plugging in gives \eqref{averageF}. 
	
	Setting
	\begin{equation*}
	F_0(\mathfrak z) \coloneqq 
	(x_2+\mathfrak z)(2x_1-x_2+\mathfrak z)e^{2\pi z(x_1+x_2)\zz}\int_1^\infty e^{-\frac{3}{2} \pi z(x_2+\mathfrak z)^2 w_1^2}\int_{w_1}^\infty e^{-\frac{1}{2} \pi z (2x_1-x_2+\mathfrak z)^2 w_2^2}dw_2dw_1,
	\end{equation*} 
	a direct calculation shows that
	\begin{align*}
	&F''_0(0)=4\pi^2z^2(x_1+x_2)^2x_2(2x_1-x_2)\int_1^\infty e^{-\frac{3}{2} \pi z x_2^2 w_1^2}\int_{w_1}^\infty e^{-\frac{1}{2} \pi z (2x_1-x_2)w_2^2}dw_2dw_1\\
	&  
	-\pi z (2x_1-x_2)(4x_1+x_2)e^{-\frac{3}{2}\pi z x_2^2}
	\int\displaylimits_{1}^\infty e^{-\frac{1}{2} \pi z (2x_1-x_2)^2w_2^2}dw_2+e^{-2\pi zQ^*(\boldsymbol{x})}-\frac{x_1^2-x_2^2}{2\pi zQ^*(\boldsymbol{x})^2}e^{-2\pi zQ^*(\boldsymbol{x})}.
	\end{align*}
	Noting that $\displaystyle\sideset{}{^a}\sum_{\boldsymbol{x}} (x_2^2 - x_1^2)=0$ and
	\begin{align*}
	\Big[(x_1 + x_2)^2 x_2 (2 x_1 - x_2) \Big]_{x_1 \mapsto x_2 - x_1}
	+(x_1 + x_2)^2 x_2 (2 x_1 - x_2) &= 3 x_2^2 (2 x_1 - x_2)^2,\\
	\left[(x_2 - 2 x_1)(x_2 + 4x_1) \right]_{x_1 \mapsto x_2 - x_1}
	+(x_2 - 2 x_1)(x_2 + 4x_1)&= -4 (2x_1 - x_2)^2,
	\end{align*}
	yields, after a change of variables,
	\begin{align}\label{finalavgsum}
	\sideset{}{^a}\sum_{\boldsymbol{x}} F(\boldsymbol{x})&=
	\frac{\pi \sqrt{3}}{2} \sideset{}{^a}\sum_{\boldsymbol{x}} \Bigg(
	3 \pi z  x_2^2  (2 x_1 - x_2)^2
	\int\displaylimits_0^\infty
	\frac{e^{- \frac{3}{2} \pi z  x_2^2 w_1}}{\sqrt{w_1 +1}}
	\int\displaylimits_{w_1}^\infty
	\frac{e^{- \frac{1}{2} \pi z  (2 x_1 - x_2)^2 w_2 } }{\sqrt{w_2 + 1}}
	d w_2
	d w_1 
	\\\notag
	&\hspace{6.3cm}
	-2  (2x_1 - x_2)^2
	\int\displaylimits_0^\infty
	\frac{e^{- \frac{1}{2} \pi z  (2 x_1 - x_2)^2 w }}{\sqrt{w+1}}
	d w
	+ \frac{2}{\pi z}
	\Bigg).
	\notag 
	\end{align}
	Using integration by parts twice, we obtain
	that
	\eqref{finalavgsum} equals
	\begin{align*}
	&\frac{\pi \sqrt{3}}{2}\sideset{}{^a}\sum_{\boldsymbol{x}} \Bigg(
	\frac{1}{\pi z}
	\int\displaylimits_0^\infty
	\frac{e^{- \frac{3}{2} \pi z x_2^2 w_1}}{(w_1 +1)^{\frac{3}{2}}}
	\int\displaylimits_{w_1}^\infty
	\frac{e^{- \frac{1}{2} \pi z  (2 x_1 - x_2)^2 w_2 } }{(w_2 +1 )^{\frac{3}{2}}}
	d w_2
	d w_1 
	\\
	&\qquad\qquad\qquad\qquad
	- \frac{2}{\pi z}
	\int\displaylimits_0^\infty
	\frac{e^{-2\pi z Q^*(\boldsymbol{x}) w}}{(w+1)^2}
	d w
	- 2 (2x_1 - x_2)^2
	\int\displaylimits_0^\infty
	\frac{e^{-2\pi z Q^*(\boldsymbol{x}) w}}{w+1}
	d w
	+ \frac{2}{\pi z}
	\Bigg).
	\notag
	\end{align*}
	Employing
	$
	\sideset{}{^a}\sum_{\!\!\!\!\boldsymbol{x}} (2x_1 - x_2)^2 =
	2  Q^*(\boldsymbol{x})
	$
	and integrating the third term by parts it is not hard to see that the contribution of the second line vanishes.
	Finally, making the change of variable $w_j \mapsto \frac{w_j}{z}$,
	gives the claim. 	
\end{proof}

It is also convenient to define a regularized version of the function $F$
\begin{equation*}
F^{\rm reg}(\boldsymbol{x}) := \int^{\rm reg}_{\IR^2} 
\frac{(w_1 + w_2)^2 e^{-\frac{2}{3} \pi z  Q(\boldsymbol{w}) } }
{\lp w_1 - i (2x_2-x_1) \rp \lp w_2 - i(2x_1-x_2) \rp} \boldsymbol{dw},
\end{equation*}
where for a function $f: \mathbb R^2\to \mathbb R$, we set
\begin{equation*}
\int^{\rm reg}_{\IR^2}  f(\boldsymbol{w})  \boldsymbol{dw}
\coloneqq
\frac{1}{2} \int_{\IR^2}  
\lp f(w_1, w_2) + f(-w_1, w_2) \rp \boldsymbol{dw}.
\end{equation*}
Clearly, for $2x_1-x_2,2x_2-x_1\neq 0$, we have that $F^{\text{reg}}(\boldsymbol{x})=F(\boldsymbol{x})$. Moreover, the function $F^{\rm reg}$ has removable singularities at $2x_1-x_2=0$ and $2x_2-x_1= 0$ so it extends $F$ to these values. Lemma \ref{eq:g_r1zero} still holds true at $2x_1-x_2=0$, $2x_2-x_1= 0$, or $x_1 + x_2 = 0$ by continuity with $F$ replaced by $F^{\text{reg}}$. This can be proved using Lebesgue's dominated convergence theorem.

We are now ready to prove Theorem \ref{thm2dim}.

\begin{proof}[Proof of Theorem \ref{thm2dim}]
	Starting with the definition of $\mathcal{E}_{2,\nu,-\frac{h'}{k}}$  and changing variables $w_j \mapsto i w_j - \frac{h'}{k}$, we rewrite $\mathcal{E}_{2,\nu,-\frac{h'}{k}}( \frac{h'}{k}  + i z)$ as
	\begin{equation}\label{Thetaint}
	-
	\int\displaylimits_0^\infty \frac{1}{(w_1 + z)^{\frac{3}{2}}}
	\int\displaylimits_{w_1}^\infty \frac{1}{(w_2 + z)^{\frac{3}{2}}}
	\sum_{\a    \pmod{2} }
	\vartheta_{\frac{2 \nu + 3 \a}{6}} \lp 3 \lp i w_1 - \frac{h'}{k} \rp \rp  
	\vartheta_{\frac{\a}{2}} \lp i w_2 - \frac{h'}{k}\rp
	d w_2  d w_1.
	\end{equation}
	Using
	\begin{equation*}
	\vartheta_{\frac{\a}{2}} \lp \t\rp
	= \sum_{n \in \a+ 2\IZ} q^{\frac{n^2}{4}}
	\andd
	\vartheta_{\frac{2 \nu + 3 \a}{6}} \lp 3 \t \rp
	= \sum_{n \in \a +\frac{2\nu}{3}+ 2\IZ  } q^{\frac{3n^2}{4}} 
	\end{equation*}
	we obtain that
	\begin{multline}\label{sumTheta}
	\sum_{\alpha   \pmod{2}}
	\vartheta_{\frac{2 \nu + 3 \a}{6}} \left(3 \left( i w_1 - \frac{h'}{k} \right)\right)  
	\vartheta_{\frac{\a}{2}} \left(i w_2 - \frac{h'}{k}\right)\\
	=
	\sum_{\a    \pmod{2} }
	\sum_{\substack{  n_1 \in  \a + \frac{2\nu}{3}+ 2\IZ \\ n_2 \in \a + 2\IZ   } }
	\zeta_{4k}^{-h'\left(3n_1^2+n_2^2\right)} 
	e^{-\frac{3}{2} \pi n_1^2 w_1-\frac{1}{2} \pi n_2^2 w_2}.
	\end{multline}
	Changing variables to $m_1 = \frac{n_1+n_2}{2}$ and $m_2 = n_1$ so that $m_1$ runs over $\frac{\nu}{3}+\IZ$ and $m_2$ runs over $\a + \frac{2\nu}{3}+2 \IZ $, we rewrite \eqref{sumTheta} as
	\begin{equation*}\label{summ12}
	\sum_{\substack{  m_1 \in  \frac{\nu}{3} +\IZ   \\ m_2 \in  -\frac{\nu}{3}+\IZ     } }
	\zeta_k^{-h'Q^*(\boldsymbol{m})}  
	e^{-\frac{3\pi}{2} m_2^2 w_1-\frac{\pi}{2} (2m_1-m_2)^2 w_2}=\sum_{\substack{  m_1 \in  \frac{\nu}{3} +\IZ   \\ m_2 \in  -\frac{\nu}{3}+\IZ     } }
	\zeta_k^{-h'Q^*(\boldsymbol{m})}  \sideset{}{^a}\sum_{\boldsymbol{m}}
	e^{-\frac{3\pi}{2}  m_2^2 w_1-\frac{\pi}{2} (2m_1-m_2)^2 w_2},
	\end{equation*}
	using that the set over which $m_1$ and $m_2$ are summed as well as the root of unity inside are both invariant under the involutions $\iota_1$ and $\iota_2$.
 We now interchange in \eqref{Thetaint} the outer sum with the integrals and find that
\begin{equation}\label{eq:E2withG}
\mathcal{E}_{2,\nu,-\frac{h'}{k}} \lp \frac{h'}{k}  + i z \rp
=- \frac{2}{\sqrt{3}}  
\sum_{\substack{  m_1 \in \IZ  + \frac{\nu}{3} \\ m_2 \in \IZ - \frac{\nu}{3}    } }
\zeta_k^{- h' Q^*(\boldsymbol{m})}
\sideset{}{^a}\sum_{\boldsymbol{m}}
G(\boldsymbol{m}; z).
\end{equation}
This interchange is legal due to Fubini's Theorem because the double series on the left-hand side of equation \eqref{eq:E2withG} is absolutely convergent when the integrand in the definition of $G(\boldsymbol{m}; z)$ is replaced with its absolute value.
Using Lemma \ref{lem:integral_identity} then gives
\begin{equation*}
\mathcal{E}_{2,\nu,-\frac{h'}{k}} \lp \frac{h'}{k}  + i z \rp
=- \frac{2}{\sqrt{3}}  
\sum_{\substack{  m_1 \in \IZ  + \frac{\nu}{3} \\ m_2 \in \IZ - \frac{\nu}{3}    } }
\zeta_k^{- h' Q^*(\boldsymbol{m})}
\sideset{}{^a}\sum_{\boldsymbol{m}}
F^{\rm reg}(\boldsymbol{m}; z).
\end{equation*}
Again using that the double series outside is absolutely convergent we can	
 change variables to $n_1 = 2 m_2 - m_1$ and $n_2 = 2m_1 - m_2$ so that $n_1, n_2$ run through integers satisfying $n_1 \equiv n_2 + \nu \pmod{3}$.
	Note that $\iota_1$ corresponds to $\kappa_1:(n_1,n_2) \mapsto (-n_2, -n_1)$ and $\iota_2$ corresponds to
	$\kappa_2:(n_1,n_2) \mapsto (n_1+n_2, -n_2)$. Thus the averaging sum over $(m_1, m_2)$ becomes averaging $(n_1, n_2)$ over the orbit
	\begin{align*}
	(n_1, n_2) 
	&\xmapsto{\kappa_1} (-n_2, -n_1)
	\xmapsto{\kappa_2} (-n_1-n_2, n_1)
	\xmapsto{\kappa_1} (-n_1, n_1+n_2)
	\xmapsto{\kappa_2} (n_2, -n_1-n_2)\\
	&\xmapsto{\kappa_1} (n_1+n_2, -n_2)
	\xmapsto{\kappa_2} (n_1, n_2).
	\end{align*}
	Noting that the integrals corresponding to $(n_1, n_2)$ and to $(-n_2, -n_1)$ are the same, we obtain\footnote{
Each of the three terms in the parenthesis naively gives equal contributions as they seem to be related by a change of dummy variables for the double sum. This however leads to a wrong result; because if the terms are separated, one only gets conditionally convergent double series. The ordering in the double sum that gives convergence should be picked to be the same for each of the three terms and that ordering is not necessarily compatible with the change of variables required to show that the contribution from each term is equal.	
	}
	\begin{align*}
	&\mathcal{E}_{2,\nu,-\frac{h'}{k}} \lp \frac{h'}{k}  + i z \rp
	=\frac{2}{3\sqrt{3}}  
	\sum_{\substack{  n_1, n_2 \in \IZ   
			\\ n_1 \equiv n_2 + \nu    \pmod{3}   }}
	\zeta_{3k}^{- h' Q(\boldsymbol{n})}
	\int^{\rm reg}_{\IR^2} 
	(w_1 + w_2)^2 e^{-\frac{2}{3} \pi z Q(\boldsymbol{w}) }
	\\
	&
	\quad  
	\times 
	\lp 
	\frac{1}{\lp i w_1 + n_1 \rp \lp i w_2 + n_2 \rp}
	+ \frac{1}{\lp i w_1 - n_1 \rp \lp i w_2 + n_1 + n_2 \rp}
	+ \frac{1}{\lp i w_1 + n_1 + n_2 \rp \lp i w_2 - n_2 \rp}
	\rp
	\boldsymbol{dw}.
	\end{align*}
We then write $n_j = r_j + 3 k m_j$ where $r_j$ runs modulo $3k$ and pick a particular order for the sums over $m_j$ that makes the individual terms convergent to obtain that
	\begin{align*}
	\mathcal{E}_{2,\nu,-\frac{h'}{k}} \left(\frac{h'}{k}  + i z\right)=\frac{2}{3\sqrt{3}} 
	\sum_{\substack{  r_1, r_2 \!\!\!\pmod{3k} 
			\\ r_1 \equiv r_2 + \nu \!\!\! \pmod{3}   } }
	\zeta_{3k}^{- h' Q(\boldsymbol{r})}
	\int^{\rm reg}_{\IR^2} 
	e^{-\frac{2}{3} \pi z Q(\boldsymbol{w})} (w_1 + w_2)^2\sum_{j=1}^3 \mathcal C_{\boldsymbol{r},j}(\boldsymbol{w})\boldsymbol{dw},
	\end{align*}
	where
	\begin{align*}
	\mathcal C_{\boldsymbol{r},1}(\boldsymbol{w}) &:= \lim_{M_1,M_2\to\infty} \sum_{m_1=-M_1}^M \sum_{m_2=-M_2}^{M_2} \frac{1}{(iw_1+r_1+3km_1)(iw_2+r_2+3km_2)},\\
	\mathcal C_{\boldsymbol{r},2}(\boldsymbol{w}) &:= \lim_{M_1,M_2\to\infty} \sum_{m_1=-M_1}^M \sum_{m_2=-M_2}^{M_2}\frac{1}{(iw_1-r_1-3km_1)(iw_2+r_1+r_2+3k(m_1+m_2))},\\
	\mathcal C_{\boldsymbol{r},3}(\boldsymbol{w}) &:= \lim_{M_1,M_2\to\infty} \sum_{m_1=-M_1}^M \sum_{m_2=-M_2}^{M_2}\frac{1}{(iw_1+r_1+r_2+3k(m_1+m_2))(iw_2-r_2-3km_2)}.\\
	\end{align*}
	Using \eqref{cotid}, we obtain
	\begin{equation*}
	\mathcal C_{\boldsymbol{r},1}(\boldsymbol{w}) = \frac{\pi^2}{9k^2} \cot\left(\pi\frac{iw_1+r_1}{3k}\right)\cot\left(\pi\frac{iw_2+r_2}{3k}\right).
	\end{equation*}
	To compute $\mathcal C_{\boldsymbol{r},2}$, we observe that
	\begin{equation*}
	\mathcal C_{\boldsymbol{r},2}(\boldsymbol{w}) = \mathcal C_{\kappa_1\circ\kappa_2\circ\kappa_1(\boldsymbol{r}),1}(\boldsymbol{w}).
	\end{equation*}
	Thus these terms yield the same contribution to the overall sum. 
	For $\mathcal C_{\boldsymbol{r},3}$, we split
	\begin{multline*}
	\frac{1}{(iw_1+r_1+r_2+3k(m_1+m_2))(iw_2-r_2-3km_2)} \\
	= \left(\frac{1}{iw_1+r_1+r_2+3k(m_1+m_2)}+\frac{1}{iw_2-r_2-3km_2}\right)\frac{1}{i(w_1+w_2)+r_1+3km_1},
	\end{multline*}
	to obtain that
	\begin{equation*}
	\mathcal C_{\boldsymbol{r},3}(w_1,w_2) = -\mathcal C_{\kappa_2\circ\kappa_1(\boldsymbol{r}),1}(-w_1,w_1+w_2)-\mathcal C_{\boldsymbol{r},1}(w_1+w_2,-w_2).
	\end{equation*}
	For the contribution from the first term, we change variables $(w_1,w_2)\mapsto (-w_1,w_1+w_2)$ and for the contribution from the second term, we change variables $(w_1,w_2)\mapsto (w_1+w_2,-w_2)$ yielding that $\mathcal E_{2,\nu,-\frac{h'}{k}}(\frac{h'}{k}+iz)$ equals
	\begin{multline*}
	\frac{2 \pi^2}{27\sqrt{3}k^2} 
	\sum_{\substack{  r_1, r_2    \pmod{3k}
			\\ r_1 \equiv r_2 + \nu    \pmod{3}   } }
	\zeta_{3k}^{- h' Q(\boldsymbol{r})}\\
	\times\int^{\rm reg}_{\IR^2} 
	\left(w_1^2+w_2^2+4w_1w_2\right)
	\cot \lp  \pi \frac{i w_1 + r_1}{3k} \rp  \cot \lp  \pi \frac{i w_2 + r_2}{3k} \rp e^{-\frac{2}{3} \pi z  Q(\boldsymbol{w}) } 
	\boldsymbol{dw}.
	\end{multline*}
	Next, using \eqref{cotid2},
	we rewrite 
	\begin{multline*}
	\cot \left(\pi \frac{i w_1 + r_1}{3k} \right)\cot \left(\pi \frac{i w_2 + r_2}{3k}\right)=\frac{
		\sin \lp \frac{2 \pi r_1}{3k} \rp   \sin \lp \frac{2 \pi r_2}{3k} \rp
		- \sinh \lp \frac{2 \pi w_1}{3k} \rp   \sinh \lp \frac{2 \pi w_2}{3k} \rp
	}{\lp \cosh \lp \frac{2 \pi w_1}{3k} \rp - \cos \lp \frac{2 \pi r_1}{3k} \rp \rp  
		\lp \cosh \lp \frac{2 \pi w_2}{3k} \rp - \cos \lp \frac{2 \pi r_2}{3k} \rp \rp}\\
	- i\frac{\sinh \lp \frac{2 \pi w_1}{3k} \rp  \sin \lp \frac{2 \pi r_2}{3k} \rp
		+\sinh \lp \frac{2 \pi w_2}{3k} \rp  \sin \lp \frac{2 \pi r_1}{3k} \rp}{\lp \cosh \lp \frac{2 \pi w_1}{3k} \rp - \cos \lp \frac{2 \pi r_1}{3k} \rp \rp  
		\lp \cosh \lp \frac{2 \pi w_2}{3k} \rp - \cos \lp \frac{2 \pi r_2}{3k} \rp \rp}.
	\end{multline*}
	The contribution of the imaginary part to the integral vanishes (it is odd under the change of variables $(w_1, w_2) \mapsto (-w_1, -w_2)$ and the rest of the integrand is even) and we find that
	\begin{equation*}
	\mathcal{E}_{2,\nu,-\frac{h'}{k}} \lp \frac{h'}{k} + iz \rp = 
	-\frac{2 \pi^2}{27 \sqrt{3}k^2} 
	\sum_{\substack{ r_1, r_2    \pmod{3k} \\ r_1 \equiv r_2 + \nu    \pmod{3} }}
	\zeta_{3k}^{-h'Q(\boldsymbol{r})} 
	\mathcal F_k (\boldsymbol{r}; z), 
	\end{equation*}
	where we define 
	\begin{multline*}
	\mathcal F_k (\boldsymbol{r}; z)\coloneqq 
	\frac{1}{2} \int\displaylimits_{\IR^2} 
	\sum_{\pm}\left(-f_{\frac{r_1}{3k}}\left(\frac{w_1}{k}\right)f_{\frac{r_2}{3k}}\left(\frac{w_2}{k}\right)\pm g_{\frac{r_1}{3k}}\left(\frac{w_1}{k}\right)g_{\frac{r_2}{3k}}\left(\frac{w_2}{k}\right)\right)\\
	\times  \left(w_1^2+w_2^2\pm4w_1w_2\right) e^{-\frac{2}{3} \pi z  \left(w_1^2+w_2^2\pm w_1w_2\right) }
	\boldsymbol{dw}.
	\end{multline*}
	If $r_1 , r_2 \not\equiv 0 \pmod{3k}$, then the two terms that contribute to the sum can be separately integrated and are equal to each other, so we obtain the claim, changing $w_1$ into $-w_1$ for the minus sign.
	
	Next suppose that $r_1 \equiv 0$, $r_2 \not\equiv 0 \pmod{3k}$. In what follows, we add and subtract terms which allows us to separate several terms that are well-behaved near $w_1 = 0$ and can be integrated individually. 
	By definition 
	\[
	\mathcal F_k(0,r_2;z)=\frac12\int_{\mathbb R^2}e^{-\frac{2}{3} \pi z 
	\left(w_1^2+w_2^2\right)}g_0\left(\frac{w_1}{k}\right)g_{\frac{r_2}{3k}}\left(\frac{w_2}{k}\right)\sum_{\pm}\left(4w_1w_2\pm w_1^2\pm w_2^2\right)e^{\mp\frac{2}{3} \pi z w_1w_2}\boldsymbol{dw}.
	\]
	The term $4w_1w_2\pm w_1^2$ contributes, again changing $w_1$ into $-w_1$ for the minus sign,
	\begin{equation}\label{cont1}
	\int_{\mathbb R^2}w_1(4w_2+w_1) g_0\left(\frac{w_1}{k}\right)g_{\frac{r_2}{3k}}\left(\frac{w_2}{k}\right)e^{-\frac{2}{3} \pi z Q(\boldsymbol{w})}\boldsymbol{dw}.
	\end{equation}
	In the term from $\pm w_2^2$, we write
	\[
	g_0\left(\frac{w_1}{k}\right)=\left(g_0\left(\frac{w_1}{k}\right)-\frac{3k}{\pi w_1}\right)+\frac{3k}{\pi w_1}.
	\]
	The first term has a removable singularity and contributes, changing $w_1$ into $-w_1$, for the minus sign
	\begin{equation}\label{cont2}
	\int_{\mathbb R^2}w_2^2\left(g_0\left(\frac{w_1}{k}\right)-\frac{3k}{\pi w_1}\right)g_{\frac{r_2}{3k}}\left(\frac{w_2}{k}\right)e^{-\frac{2}{3} \pi z Q(\boldsymbol{w})}\boldsymbol{dw}.
	\end{equation}
	In the term from $\frac{3k}{\pi w_1}$, we write
	\begin{equation*}
	G_{\frac{r_2}{3k}}\left(\frac{w_2}{k}\right)=\left(G_{\frac{r_2}{3k}}\left(\frac{w_2}{k}\right)-G_{\frac{r_2}{3k}}\left(\frac{w_2\pm\frac{w_1}{2}}{k}\right)\right)+G_{\frac{r_2}{3k}}\left(\frac{w_2\pm\frac{w_1}{2}}{k}\right),
	\end{equation*}
	where
	\[
	G_c(x) \coloneqq x^2g_c(x).
	\]
	The first term contributes, again changing $w_1$ into $-w_1$ for the minus term
	\begin{equation}\label{cont3}
	\frac{3k^3}{\pi}\int_{\mathbb R^2}w_1^{-1}\left(G_{\frac{r_2}{3k}}\left(\frac{w_2}{k}\right)-G_{\frac{r_2}{3k}}\left(\frac{w_2+\frac{w_1}{2}}{k}\right)\right)e^{-\frac{2}{3} \pi z Q(\boldsymbol{w})}\boldsymbol{dw}.
	\end{equation}
	Using $w_1^2+w_2^2\pm w_1w_2 =( w_2\pm \frac{w_1}{2} )^2+\frac34w_1^2$, the contribution from the final term can be written as
	\[
	\frac{3k^3}{2\pi}\int_{\mathbb R}w_1^{-1}e^{-\frac{3}{4} \pi z w_1^2}\left(\int_{\mathbb R}\sum_{\pm}\pm G_{\frac{r_2}{3k}}\left(\frac{w_2\pm\frac{w_1}{2}}{k}\right)e^{-\frac{2}{3} \pi z \left(w_2\pm\frac{w_1}{2}\right)^2}dw_2\right)dw_1.
	\]
	The integral on $w_2$ vanishes as may be seen by changing variable $w_2 \mapsto w_2 + w_1$ in the integral for the minus sign. Combining gives the claim.
	The case $r_1\not\equiv 0,\ r_2\equiv 0\pmod{3k}$ is completely analogous.
	
	Finally, we assume $r_1\equiv r_2\equiv 0\pmod{3k}$. The term coming from $4w_1w_2$ contributes, again changing $w_1$ into $-w_1$ in the term with the minus sign 
	\[
	4\int_{\mathbb R^2}w_1w_2g_0\left(\frac{w_1}{k}\right)g_0\left(\frac{w_2}{k}\right)e^{-\frac{2}{3} \pi z Q(\boldsymbol{w})}\boldsymbol{dw}.
	\]
		In the contribution from $w_2^2$, the function $w_2\mapsto w_2^2g_0\left(\frac{w_2}{k}\right)$ has a removable singularity and the exact same proof as in the case $r_1\equiv 0,\ r_2\not\equiv 0\pmod{3k}$ works for handling $g_0\left(\frac{w_1}{k}\right)$. 
In the contribution from $w_1^2$, we switch roles of $w_1$ and $w_2$. This gives overall
	\begin{equation*}
	\int_{\mathbb R^2}\mathcal G_k(\boldsymbol{w}) e^{-\frac{2}{3} \pi z Q(\boldsymbol{w})}\boldsymbol{dw},
	\end{equation*}
	where
	\begin{align}\label{split0}
	\mathcal G_k(\boldsymbol{w})& \coloneqq 4w_1w_2g_0\left(\frac{w_1}{k}\right)g_0\left(\frac{w_2}{k}\right)+k^2G_0\left(\frac{w_2}{k}\right)\left(g_0\left(\frac{w_1}{k}\right)-\frac{3k}{\pi w_1}\right)\\
	&\qquad+\frac{3k^3}{\pi w_1}\left(G_0\left(\frac{w_2}{k}\right)-G_0\left(\frac{w_2+\frac{w_1}{2}}{k}\right)\right)+k^2G_0\left(\frac{w_1}{k}\right)\left(g_0\left(\frac{w_2}{k}\right)-\frac{3k}{\pi w_2}\right)
\notag	
	\\
	&\qquad+\frac{3k^3}{\pi w_2}\left(G_0\left(\frac{w_1}{k}\right)-G_0\left(\frac{w_1+\frac{w_2}{2}}{k}\right)\right).
	\notag
	\end{align}
	Simplifying gives the claim.
\end{proof}
\section{Bounds for Eichler integrals}\label{sec5}
In this chapter we find bounds for the Eichler integrals \eqref{defineTint} and \eqref{defineTint2} using the representations from Lemma \ref{lem3.1} and Theorem \ref{thm2dim}. For this, we split off ``principal part contributions''.

\subsection{The one-dimensional case}
Define, for $b\geq 0$, 
\[
\mathcal E^*_{1,j,-\frac{h'}{k},b}\left(\frac{h'}{k}+ \frac{i}{z}  \right)
\coloneqq
e^{\frac{2\pi b}{z}}
\frac{\pi i}{3\sqrt{6}k}
\sum_{\substack{r\pmod{6k}\\r\equiv j  \pmod{6}}}
\zeta^{-h'r^2}_{12k}\int_{-2\sqrt{3b}}^{2\sqrt{3b}}wg_{\frac{r}{6k}}\left(\frac{w}{2k}\right)e^{-\frac{\pi w^2}{6 z}}dw.
\]
\begin{lem}\label{lem-onedim}
	We have, for $b\geq 0$, $h',k\in\IZ$, $k>0$,  and  $\textnormal{Re}(\frac{1}{z})  \geq  1$,
	\[
	e^{\frac{2\pi b}{z}}
	\mathcal E_{1,j,-\frac{h'}{k}}\left(\frac{h'}{k}+\frac{i}{z}\right)
	=
	\mathcal E^*_{1,j,-\frac{h'}{k},b}\left(\frac{h'}{k}+\frac{i}{z}\right)+O(\log(k)),
	\]
	where the error term is independent of $h'$ and $z$.
\end{lem}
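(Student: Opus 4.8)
The plan is to feed the Mordell-type representation of Lemma~\ref{lem3.1} (with $z$ replaced by $\tfrac1z$) into both sides and to observe that $\mathcal E^*_{1,j,-\frac{h'}{k},b}$ is, by construction, nothing but the truncation of $e^{\frac{2\pi b}{z}}\mathcal E_{1,j,-\frac{h'}{k}}$ to the window $|w|\le 2\sqrt{3b}$. Indeed, Lemma~\ref{lem3.1} gives
\[
\mathcal E_{1,j,-\frac{h'}{k}}\!\left(\tfrac{h'}{k}+\tfrac iz\right)=\frac{\pi i}{3\sqrt6\,k}\sum_{\substack{r\pmod{6k}\\ r\equiv j\pmod6}}\zeta_{12k}^{-h'r^2}\int_{\IR}w\,g_{\frac{r}{6k}}\!\left(\tfrac{w}{2k}\right)e^{-\frac{\pi w^2}{6z}}dw,
\]
so multiplying by $e^{\frac{2\pi b}{z}}$ and subtracting the definition of $\mathcal E^*$ makes the two integrals over $|w|\le 2\sqrt{3b}$ cancel, leaving only the tail
\[
e^{\frac{2\pi b}{z}}\mathcal E_{1,j,-\frac{h'}{k}}-\mathcal E^*_{1,j,-\frac{h'}{k},b}=\frac{\pi i}{3\sqrt6\,k}\sum_{\substack{r\pmod{6k}\\ r\equiv j\pmod6}}\zeta_{12k}^{-h'r^2}\int_{|w|>2\sqrt{3b}}w\,g_{\frac{r}{6k}}\!\left(\tfrac{w}{2k}\right)e^{\frac1z\left(2\pi b-\frac{\pi w^2}{6}\right)}dw.
\]
Everything thus reduces to an $O(\log k)$ bound for this tail that is uniform in $h'$ and $z$.

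First I would dispose of the $h'$- and $z$-dependence. Since $|\zeta_{12k}^{-h'r^2}|=1$, the dependence on $h'$ disappears upon taking absolute values. For $z$, note that on the tail region $2\pi b-\frac{\pi w^2}{6}=-\frac{\pi}{6}(w^2-12b)\le 0$ because $(2\sqrt{3b})^2=12b$; combined with $\mathrm{Re}(\tfrac1z)\ge1$ this yields the clean, $z$-free bound
\[
\left|e^{\frac1z\left(2\pi b-\frac{\pi w^2}{6}\right)}\right|\le e^{-\frac{\pi}{6}(w^2-12b)}\le 1 .
\]
It then remains to prove that $\frac1k\sum_{r}\int_{|w|>2\sqrt{3b}}|w|\,\bigl|g_{\frac{r}{6k}}(\tfrac{w}{2k})\bigr|\,e^{-\frac{\pi}{6}(w^2-12b)}\,dw=O(\log k)$.

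The heart of the matter is to decouple the $w$- and $r$-dependence inside $g$. Using $\cosh(\tfrac{\pi w}{3k})-\cos(\tfrac{\pi r}{3k})=2\sinh^2(\tfrac{\pi w}{6k})+2\sin^2(\tfrac{\pi r}{6k})$ together with $\sinh(\tfrac{\pi w}{3k})=2\sinh(\tfrac{\pi w}{6k})\cosh(\tfrac{\pi w}{6k})$ in the definition \eqref{eq:gdefn} of $g_c$, the arithmetic--geometric mean inequality applied to the denominator gives, for $r\not\equiv0\pmod{6k}$,
\[
\left|g_{\frac{r}{6k}}\!\left(\tfrac{w}{2k}\right)\right|=\frac{\bigl|\sinh(\tfrac{\pi w}{6k})\bigr|\cosh(\tfrac{\pi w}{6k})}{\sinh^2(\tfrac{\pi w}{6k})+\sin^2(\tfrac{\pi r}{6k})}\le\frac{\cosh(\tfrac{\pi w}{6k})}{2\bigl|\sin(\tfrac{\pi r}{6k})\bigr|}.
\]
The $w$-integral then factors off and, bounding $\cosh(\tfrac{\pi w}{6k})\le e^{\frac{\pi|w|}{6}}$ and completing the square, is $O(1)$ uniformly in $k\ge1$. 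The surviving $r$-sum is $\frac1k\sum_{r\not\equiv0}\frac{1}{|\sin(\frac{\pi r}{6k})|}$, and since $|\sin(\tfrac{\pi r}{6k})|^{-1}\asymp \frac{k}{\min(r,6k-r)}$ this is comparable to $\sum_{m}\frac1m\asymp\log k$; this harmonic sum is exactly where the logarithm is born. The single residue $r\equiv0\pmod{6k}$ (present only when $6\mid j$) must be handled separately: there $g_0(\tfrac{w}{2k})=\coth(\tfrac{\pi w}{6k})$, so $|w|\coth(\tfrac{\pi w}{6k})=O(k)$ on the Gaussian-relevant range and, after the $\frac1k$ prefactor, this term contributes only $O(1)$.

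The main obstacle is engineering the bound on $g$ so that the variables genuinely separate: the naive estimate leaves $\sinh^2(\tfrac{\pi w}{6k})$ coupled to $\sin^2(\tfrac{\pi r}{6k})$ in the denominator, and it is the AM--GM step that frees the $r$-sum into a harmonic series and thereby pins down the $\log k$. The secondary technical points are to verify that the $w$-integral stays $O(1)$ uniformly in $k$ despite the growing factor $\cosh(\tfrac{\pi w}{6k})$ (resp. the near-singular $\coth$ for $r\equiv0$)---which the Gaussian $e^{-\pi w^2/6}$ controls---and to confirm, as already arranged in the second step, that all implied constants are independent of $h'$ and of $z$. I expect no cancellation to be needed: the absolute-value bounds suffice. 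A slightly finer estimate retaining the full denominator $\sinh^2(\tfrac{\pi w}{6k})+\sin^2(\tfrac{\pi r}{6k})$ and invoking $\sum_{r=1}^{n-1}\csc^2(\tfrac{\pi r}{n})=\frac{n^2-1}{3}$ would even give $O(1)$, but the cleaner AM--GM bound and the resulting harmonic sum already produce the stated $O(\log k)$.
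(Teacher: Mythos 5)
Your proposal is correct and follows essentially the same route as the paper: both identify the difference as the tail $|w|\ge 2\sqrt{3b}$ of the Mordell representation from Lemma \ref{lem3.1}, dispose of the $h'$- and $z$-dependence via unimodularity and $\mathrm{Re}(\frac1z)\ge 1$ together with $w^2\ge 12b$ on the tail, extract the $O(\log k)$ from a harmonic-type sum over $r\not\equiv 0\pmod{6k}$, and treat the single residue $r\equiv 0$ separately as $O(1)$ using the $\coth$-type bound. The only cosmetic difference is your pointwise estimate on $g$ (the AM--GM bound $|g_{\frac{r}{6k}}(\frac{w}{2k})|\le \frac{\cosh(\pi w/(6k))}{2|\sin(\pi r/(6k))|}$, with the $\cosh$ absorbed by the Gaussian) versus the paper's $w$-uniform bound $|g_c(w)|\ll \frac1c+\frac1{1-c}$ in \eqref{gbound}, which lead to the same harmonic sum.
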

\begin{proof}
	We compute
	\begin{multline*}
	\left|e^{\frac{2\pi b}{z}}
	\mathcal E_{1,j,-\frac{h'}{k}}\left(\frac{h'}{k}+\frac{i}{z}\right)
	-\mathcal E^*_{1,j,-\frac{h'}{k},b}\left(\frac{h'}{k}+\frac{i}{z}\right)\right|
	\\\ll
	e^{2\pi b\mathrm{Re}\left(\frac1z\right)}\frac{1}{k}\sum_{r=0}^{6k-1}
	\int\displaylimits_{|w|\geq 2\sqrt{3b}} 
	\left| w g_{\frac{r}{6k}}\left(\frac{w}{2k}\right)  \right|
	e^{-\frac{1}{6} \pi \text{Re}\left(\frac1z\right)w^2}dw.
	\end{multline*}
	We first bound the contribution from $r\neq 0$.
	 Using that, for $0<c<1$, 
	 \begin{equation}\label{gbound}
	| g_c(w) |
	\ll \frac{1}{c}   + \frac{1}{1-c},
	 \end{equation}
	 it is not hard to see that this contribution is $O(\log(k))$.
	 
	For $r= 0$, we 
	require that 
	\begin{equation}\label{gobound}
	| w g_0(w) |
	\ll 1+ |w |,
	\end{equation}
	to show that this term contributes $O(1)$. Combining gives the statement of the lemma.
\end{proof}

\subsection{The two-dimensional case}

Define, for $b \geq 0$,
\begin{align*}
&\mathcal{E}^*_{2, \nu, - \frac{h'}{k}, b} \lp \frac{h'}{k}+ \frac{i}{z} \rp
\notag:=
- e^{\frac{2 \pi b}{z}}   \frac{2 \pi^2}{27 \sqrt{3} k^2}  
\sum_{\substack{r_1, r_2    \pmod{3k} \\ r_1 \equiv r_2 + \nu    \pmod{3} }}
\zeta_{3k}^{- h' Q(\boldsymbol{r})}\ \ 
\int\displaylimits_{\mathclap{Q(\boldsymbol{w}) \leq 3 b}} 
g_{k,\boldsymbol{r}} (\boldsymbol{w}) 
e^{-\frac{2\pi}{3z} Q(\boldsymbol{w})}   
\boldsymbol{dw}.
\end{align*}

\begin{lem}\label{lem-twodim}
	For $b \geq 0$, $h',k\in\IZ$, $k>0$, and $\mathrm{Re}(\frac{1}{z})\geq 1$ we have
	\begin{equation*}
	e^{\frac{2\pi b}{z}}  \,  
	\mathcal{E}_{2, \nu, - \frac{h'}{k}} \lp \frac{h'}{k}+ \frac{i}{z} \rp
	= 
	\mathcal{E}^*_{2, \nu, - \frac{h'}{k}, b} \lp \frac{h'}{k}+ \frac{i}{z} \rp
	+ \mathrm{O} \lp\log (k)^2 \rp,
	\end{equation*}
	where the error term is independent of $h'$ and $z$.
\end{lem}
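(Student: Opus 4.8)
The plan is to run the one-dimensional argument of Lemma~\ref{lem-onedim} in two variables, now starting from the Mordell-type representation of Theorem~\ref{thm2dim}. Replacing $z$ by $\frac1z$ there and multiplying by $e^{\frac{2\pi b}{z}}$ gives
\begin{equation*}
e^{\frac{2\pi b}{z}}\,\mathcal{E}_{2,\nu,-\frac{h'}{k}}\lp \frac{h'}{k}+\frac{i}{z}\rp
= -e^{\frac{2\pi b}{z}}\frac{2\pi^2}{27\sqrt{3}k^2}
\sum_{\substack{r_1,r_2\pmod{3k}\\ r_1\equiv r_2+\nu\pmod 3}}
\zeta_{3k}^{-h'Q(\boldsymbol r)}\int_{\IR^2}g_{k,\boldsymbol r}(\boldsymbol w)\,e^{-\frac{2\pi}{3z}Q(\boldsymbol w)}\boldsymbol{dw}.
\end{equation*}
Since $\mathcal{E}^*_{2,\nu,-\frac{h'}{k},b}$ is by construction exactly this expression with the integration truncated to $Q(\boldsymbol w)\leq 3b$, the difference $e^{\frac{2\pi b}{z}}\mathcal{E}_{2,\nu,-\frac{h'}{k}}-\mathcal{E}^*_{2,\nu,-\frac{h'}{k},b}$ is precisely the contribution of the tail $Q(\boldsymbol w)>3b$. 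The whole task is therefore to bound this tail by $\mathrm{O}(\log(k)^2)$, uniformly in $h'$ and $z$.

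First I would apply the triangle inequality; since $|\zeta_{3k}^{-h'Q(\boldsymbol r)}|=1$ this removes the $h'$-dependence and, writing $R:=\mathrm{Re}(\frac1z)\geq 1$, reduces matters to bounding
\begin{equation*}
\frac{1}{k^2}\sum_{r_1,r_2=0}^{3k-1}\ \int\displaylimits_{Q(\boldsymbol w)>3b}\big|g_{k,\boldsymbol r}(\boldsymbol w)\big|\,e^{2\pi bR-\frac{2\pi}{3}R\,Q(\boldsymbol w)}\,\boldsymbol{dw}.
\end{equation*}
The key book-keeping is that $Q$ is positive definite, so $w_1^2+w_2^2+4w_1w_2\ll Q(\boldsymbol w)$ and, after passing to coordinates adapted to $Q$, the $\boldsymbol w$-integral of any function of $Q$ collapses to a constant times a one-dimensional integral over $u=Q(\boldsymbol w)\in(3b,\infty)$. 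Carrying the polynomial prefactor along and integrating, the factor $e^{2\pi bR}$ cancels the boundary value $e^{-\frac{2\pi}{3}R\cdot 3b}$ of the Gaussian, leaving a quantity $\ll_b 1$ \emph{uniformly} in $R\geq 1$. Thus, once a pointwise bound on $g_{k,\boldsymbol r}$ is inserted, the $\boldsymbol w$-integral contributes only a bounded factor.

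It then remains to sum the pointwise bounds over $r_1,r_2$. In the generic range $r_1,r_2\not\equiv 0\pmod{3k}$ the integrand factorizes as in \eqref{gnoto}, and I would invoke the bound \eqref{gbound} together with its analog $|f_c(w)|\ll\frac1c+\frac{1}{1-c}$ for $f_c$ to get $|g_{k,\boldsymbol r}(\boldsymbol w)|\ll Q(\boldsymbol w)\,C(r_1)C(r_2)$ with $C(r)=\frac{3k}{r}+\frac{3k}{3k-r}$. Dropping the congruence constraint only enlarges the sum, so this range contributes $\ll\frac{1}{k^2}\big(\sum_{r=1}^{3k-1}C(r)\big)^2\ll\frac{1}{k^2}(k\log k)^2=\log(k)^2$, which gives the stated order. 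The hard part will be the degenerate ranges $r_1\equiv 0$ or $r_1\equiv r_2\equiv 0\pmod{3k}$, where the product bound breaks down because $g_0(w_j/k)\sim\frac{3k}{\pi w_j}$ is singular at the origin; here $g_{k,\boldsymbol r}$ is given by \eqref{eq:g_r1zero}--\eqref{eq:g_r12zero}, and I would control it using the bound \eqref{gobound} on $g_0$ together with the fact that the subtracted pieces in \eqref{eq:g_r1zero} and \eqref{eq:g_r12zero} are engineered precisely to cancel the $\frac{1}{w_j}$ poles, so that the singularities are removable and the integrands admit uniform, integrable majorants. Because $r_1\equiv 0\pmod{3k}$ pins $r_1$ to a single value (and likewise $r_2$ in the doubly degenerate case), these contributions come in at lower order, $\mathrm{O}(\log k)$ and $\mathrm{O}(1)$ respectively, and are absorbed into the $\mathrm{O}(\log(k)^2)$; collecting all ranges yields the claim.
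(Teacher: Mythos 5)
Your proposal follows the paper's own proof essentially step for step: the same triangle-inequality reduction to the tail $Q(\boldsymbol{w})\geq 3b$, the same radial change of variables showing the tail integral is $\ll e^{-2\pi b\,\mathrm{Re}(1/z)}$ so that the prefactor $e^{2\pi b/z}$ is cancelled uniformly in $\mathrm{Re}(1/z)\geq 1$, the same bounds $\ll \frac1c+\frac{1}{1-c}$ on $g_c$ and $f_c$ producing the harmonic double sum $\ll \log(k)^2$ in the generic case, and the same strategy for the degenerate residues (removable singularities from the subtracted terms, the bound \eqref{gobound}, and Taylor-type control), with the resulting orders $\mathrm{O}(\log k)$ and $\mathrm{O}(1)$ matching the paper's. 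The only difference is one of completeness: the paper executes the degenerate cases explicitly via the splittings \eqref{cont1}--\eqref{cont3} and \eqref{split0} together with the estimate \eqref{primbound}, which your sketch correctly anticipates but does not carry out.
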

\begin{proof}
	We first bound
	\begin{multline*}
	\left|
	e^{\frac{2\pi b}{z}}   
	\mathcal{E}_{2, \nu, - \frac{h'}{k}} \lp \frac{h'}{k}+ \frac{i}{z} \rp
	-
	\mathcal{E}^*_{2, \nu, - \frac{h'}{k}, b} \lp \frac{h'}{k}+ \frac{i}{z} \rp
	\right|
	\notag \\
	\ll
	\frac{e^{2 \pi b \mathrm{Re} \lp \frac{1}{z} \rp} }{k^2}
	\sum_{\boldsymbol{r} \pmod{3k}} \ 
	\int\displaylimits_{Q(\boldsymbol{w}) \geq 3 b}
	|  g_{k,\boldsymbol{r}} (\boldsymbol{w}) | 
	e^{-\frac{2}{3} \pi \mathrm{Re} \lp \frac{1}{z} \rp Q(\boldsymbol{w})}   
	\boldsymbol{dw}.
	\end{multline*}
Using polar coordinates, it is not hard to show that, for $j_1, j_2 \in \IN_0$, we have
	\begin{equation*}\label{claim1}
	\int\displaylimits_{\mathclap{Q(\boldsymbol{w}) \geq 3 b}} 
	| w_1|^{j_1}   | w_2|^{j_2} 
	e^{-\frac{2}{3} \pi \mathrm{Re} \lp \frac{1}{z} \rp Q(\boldsymbol{w})}   
	\boldsymbol{dw}
	\ll  e^{-2 \pi b \mathrm{Re} \lp \frac{1}{z} \rp} .
	\end{equation*}

We now first bound the contribution from $r_1,r_2\not\equiv 0\pmod{3k}$. By \eqref{gbound} and the fact that $f_c$ is maximized at $w=0$, one may show that this contribution is $O(\log(k)^2)$.

We next consider the case $r_1\equiv 0, r_2\not\equiv 0\pmod{3k}$.  For this, we split $g_{k,(0,r_2)}$ as in \eqref{cont1}, \eqref{cont2}, and \eqref{cont3}. We first bound \eqref{cont1}, using \eqref{gbound} and \eqref{gobound},
\[
\left| w_1(4w_2+w_1)g_0\left(\frac{w_1}{k}\right)g_{\frac{r_2}{3k}}\left(\frac{w_2}{k}\right) \right|
\ll k\left(1+\frac{|w_1|}{k}\right) (|w_1|+|w_2| )
\left(\frac{k}{r_2} +  \frac{k}{3k - r_2}     \right).
\]
For \eqref{cont2}, we use the bound
\begin{equation}\label{primbound}
g_0(w)-\frac{3}{\pi w}\ll 1,
\end{equation}
and estimate 
\[
\left| 
w_2^2 \, 
g_{\frac{r_2}{3k}}
\left(\frac{w_2}{k}\right)\left(g_0\left(\frac{w_1}{k}\right)-\frac{3k}{\pi w_1}\right)
\right|
\ll
\left(\frac{k}{r_2} +  \frac{k}{3k - r_2}   \right)
w_2^2.
\]
For \eqref{cont3}, we use Taylor expansions, to bound
\begin{equation*}
\frac{1}{|w_1|}\left|G_c(w_2)-G_c\left(w_2+\frac{w_1}{2}\right)\right| \ll\left(w_1^2+w_2^2+|w_2|\right)\left(\frac1{c^2}+\frac{1}{(1-c)^2}\right).
\end{equation*}
Thus we may estimate
\begin{align*}
\frac{k^3}{|w_1|}\left|G_{\frac{r_2}{3k}}\left(\frac{w_2}{k}\right)-G_{\frac{r_2}{3k}}\left(\frac{w_2+\frac{w_1}{2}}{k}\right)\right|\ll \left( w_1^2 + w_2^2 + |w_2| \right) \left(\frac{k^2}{r_2^2} +\frac{k^2}{(3k-r_2)^2}  \right)
.
\end{align*}
Therefore 
\begin{align*}
| g_{k,(0,r_2)}(\boldsymbol{w})|
&\ll 
k\left( 1+\frac{|w_1|}{k}\right)  (|w_1|+|w_2|)
\left(\frac{k}{r_2} +  \frac{k}{3k - r_2}     \right)\\
&\quad
+
\left( w_1^2 + w_2^2 + |w_2| \right) \left(\frac{k^2}{r_2^2} +\frac{k^2}{(3k-r_2)^2}  \right)
\end{align*}
and we obtain the overall contribution as $O(\log(k))$.
The case $r_1\not\equiv  0\pmod{3k}$, $r_2 \equiv  0\pmod{3k}$ is done in exactly the same way.

We finally consider the case $r_1\equiv r_2\equiv 0\pmod{3k}$. Using the splitting as in \eqref{split0} and employing \eqref{gobound}, we may bound the first term of $\mathcal G_k$ against
\begin{equation*} k^2\left(1+\frac{|w_1|}{k}\right)\left(1+\frac{|w_2|}{k}\right).
\end{equation*}
Overall this term contributes $O(1)$.

The second term of $\mathcal G_k$ is estimated against, using \eqref{gobound} and \eqref{primbound}
\begin{equation*}
k^2 \frac{|w_2|}{k}\left(1+\frac{|w_2|}{k}\right).
\end{equation*}
Overall this term contributes $O(\frac{1}{k})$.
The fourth term is handled in exactly in the same way. 

The third term of $\mathcal G_k$ may be bounded by
\begin{equation*}\label{third0} 
\frac{k^3}{|w_1|}\left|G_0\left(\frac{w_2}{k}\right)-G_0\left(\frac{w_2+\frac{w_1}{2}}{k}\right)\right|\ll k^2(1+|w_1|),
\end{equation*}
again using Taylor's Theorem.
Thus overall this term contributes $O(1)$.
The  fifth term is handled in exactly the same way. 

Combining gives the claim.
\end{proof}

\section{The Circle Method and the proof of Theorem \ref{main theorem} and Corollary \ref{ascor}}\label{sec6}
\subsection{Proof of Theorem \ref{main theorem}}
We follow the version of the Circle Method due to Rademacher and refer the reader to Chapter 5 of \cite{AP} for basic facts on Farey fractions and the Circle Method. 
The starting point is Cauchy's Theorem, which yields
\[
a_{3,\mu}(n)=\int_i^{i+1}f_{3,\mu}(\tau)e^{-2\pi in_\mu\tau}d\tau,
\]
where the integral goes along any path connecting $i$ and $i+1$. We decompose the integral into arcs lying near the root of unity $\zeta_k^h$, where $0\leq h<k\leq N$ with $\mathrm{gcd}(h,k)=1$, and $N\in\mathbb N$ is a parameter, which then tends to infinity. For this, the {\it Ford Circle} $\mathcal C_{h,k}$ denotes the circle in the complex $\tau$-plane with radius
$\frac{1}{2k^2}$ and center $\frac{h}{k}+\frac{i}{2k^2}$.
We let $P_N := \bigcup_{\frac{h}{k} \in F_N} C_{h,k}(N)$, where $F_N$ is the Farey sequence of order $N$ and $C_{h,k}(N)$ is the upper arc of the Ford Circle $\mathcal C_{h,k}$ from its intersection with $\mathcal C_{h_1,k_1}$ to its intersection with $\mathcal C_{h_2,k_2}$ where $\frac{h_1}{k_1} < \frac{h}{k} < \frac{h_2}{k_2}$ are consecutive fractions in $F_N$. In particular $C_{0,1}(N)$ and $C_{1,1}(N)$ are half-arcs with the former starting at $i$ and the latter ending at $i+1$. This is illustrated for $P_4$ in Figure \ref{fig_Rademacher_path}.
\begin{minipage}{\linewidth}
\begin{center}
\centering
\includegraphics[width=0.4\linewidth]{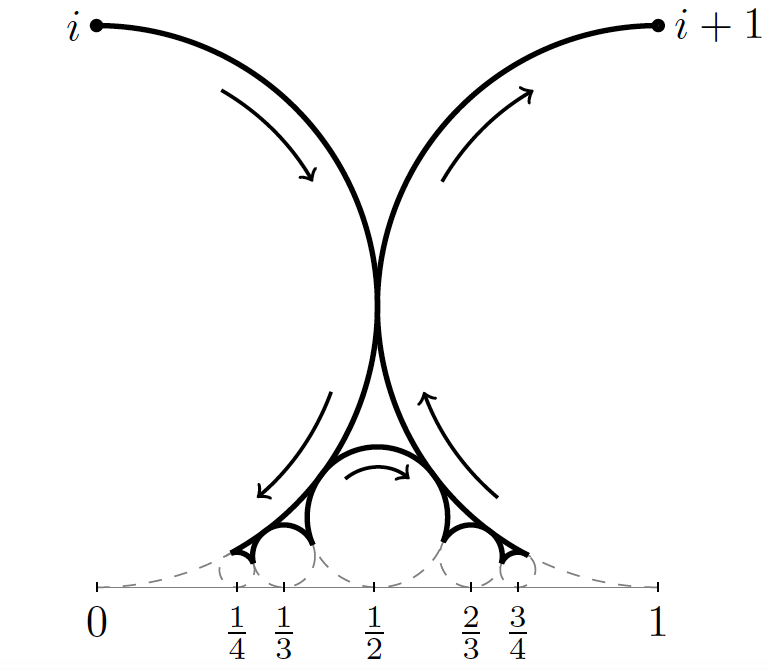}
\captionof{figure}{Rademacher's integration path $P_N$ for $N=4$.}
\label{fig_Rademacher_path}
\end{center}
\end{minipage} We obtain
\begin{equation}\label{formulacoeff}
\a_{3,\mu} (n) = 
\sum_{k=1}^N \sum_{\substack{0 \leq h \leq k \\ \mathrm{gcd}(h,k)=1 }}\ 
\int\displaylimits_{C_{h,k}(N)} 
 f_{3,\mu}(\t)
 e^{-2 \pi i n_\mu\t}
d \t.
\end{equation}

Next, we make the change of variables $\tau=\frac{h}{k}+\frac{iz}{k^2}$, which maps the Ford Circles to a standard circle with radius $\frac12$ which is centered at $z=\frac{1}{2}$. 
The image of the arc $C_{h,k}(N)$ is now an arc on the standard circle from $z_1$ to $z_2$, where
\begin{equation*}
z_1 = z_1 (h,k ) := \frac{k^2}{k^2 + k_1^2} + i\frac{ k k_1}{k^2 + k_1^2}, \qquad
z_2 = z_2 (h,k ) := \frac{k^2}{k^2 + k_2^2} - i\frac{ k k_2}{k^2 + k_2^2} .
\end{equation*}
We also combine the half-arcs $C_{0,1}(N)$ and $C_{1,1}(N)$ into an arc in the $z$-plane from $z_1 (1,1) \coloneqq \frac{1}{1-iN}$ to $z_2 (1,1) \coloneqq \frac{1}{1+iN}$ by shifting the $C_{1,1}(N)$ half-arc as $\t \mapsto \t-1$. Note that on the disc bounded by the standard circle we always have $\text{Re}(\frac1z)\geq 1$. Moreover, for any point $z$ on the chord that is connecting $z_1 (h,k )$ and $z_2 (h,k )$, we have $| z | \leq \frac{k \sqrt{2}}{N}$ and the length of this chord does not exceed $\frac{2 \sqrt{2}k}{N}$.

Equation \eqref{formulacoeff} then becomes
\begin{equation}\label{FourierCoeff}
\a_{3,\mu} (n) = 
\sum_{k=1}^N  \frac{i}{k^2}
\sum_{\substack{0 \leq h < k \\ \mathrm{gcd}(h,k)=1 }}
\ \ \int\displaylimits_{z_1} ^{z_2}
e^{- 2 \pi i n_\mu \lp \frac{h}{k} + \frac{iz}{k^2}   \rp}  
f_{3,\mu} \lp \frac{h}{k} + \frac{iz}{k^2}  \rp
d z.
\end{equation}
Now, as $N \to \infty$ the path of integration gets closer to the point $z=0$ for each term. Using modular transformations one can control the way the integrand behaves as this happens. In particular, we use the modular transformations $M = \left(\begin{smallmatrix}h' & -\frac{1+hh'}{k} \\ k & -h\end{smallmatrix}\right)$ with $h'$ satisfying $h h' \equiv -1\pmod{k}$. Under this modular transformation we have\footnote{The Ford Circles are mapped to the $\mathrm{Im} (\t) = 1$ line under this transformation.}
$
\frac{a \t + b}{c \t + d}  = \frac{h'}{k} + \frac{i}{z}.
$
Using Lemma \ref{lem:mocktrans}, we obtain that $f_{3,\mu} ( \frac{h}{k} + \frac{iz}{k^2})$ equals
\begin{align}\notag
&\frac{z^{\frac{3}{2}}}{k^{\frac{3}{2}}}
\sum_{\nu \pmod{3}} \psi_{h,k} (\nu, \mu) \  \Bigg( 
f_{3,\nu} \lp \frac{h'}{k} + \frac{i}{z} \rp
-  \frac{9 \sqrt{3}i}{2 \sqrt{2}\pi}\sum_{\a \pmod{2}}
f_\a \lp \frac{h'}{k} + \frac{i}{z} \rp  
\mathcal{E}_{1,2\nu+3\a, -\frac{h'}{k}} \lp \frac{h'}{k} + \frac{i}{z} \rp
\notag \\
&
\qquad \qquad \qquad \qquad \qquad\qquad
- \frac{9 \sqrt{3}}{16\pi^2} 
f \lp \frac{h'}{k} + \frac{i}{z} \rp
\mathcal{E}_{2,\nu, -\frac{h'}{k}} \lp \frac{h'}{k} + \frac{i}{z} \rp
\Bigg).
\label{modtran}
\end{align}

We now approximate all $q$-series and Eichler integrals by their principal parts and show that the introduced error is neglectible. For this, 
we use \eqref{eq:h0Fourier}, \eqref{eq:h1Fourier}, \eqref{eq:h30Fourier}, and \eqref{eq:h31Fourier} to note that
\begin{equation*}
f_{3,0}^{\rm P} (\t) \coloneqq \frac{1}{9}  q^{-\frac{3}{8}},
\quad
f_{3,1}^{\rm P} (\t) \coloneqq f_{3,-1}^{\rm P} (\t)  = 0,
\quad
f_{0}^{\rm P} (\t) \coloneqq - \frac{1}{12}  q^{-\frac{3}{8}} ,
\quad
f_{1}^{\rm P} (\t) \coloneqq 0,
\quad 
f^{\rm P} (\t) \coloneqq q^{-\frac{3}{8}} ,
\end{equation*}
where $F^p$ denotes the principal (or polar) part of a $q$-series $F$. We obtain, plugging \eqref{modtran} into \eqref{FourierCoeff} and using Lemmas \ref{lem-onedim} and \ref{lem-twodim}
\begin{equation*}
a_{3,\mu}(n) = \mathscr S_1(N)+\mathscr S_2(N)+\mathscr S_3(N)+\mathcal E(N),
\end{equation*}
where
\begin{align*}
\mathscr S_1(N) &\coloneqq \frac{i}{9} \sum_{k=1}^N k^{-\frac72} \sum_{\substack{ 0 \leq h < k \\ \gcd(h,k)=1}}   \zeta_{8k}^{-3h'-8n_\mu h} 
\, \psi_{h,k}(0,\mu)
 \int\displaylimits_{z_1}^{z_2} z^{\frac32} e^{2\pi n_\mu\frac{z}{k^2}+\frac{3\pi}{4z}} dz,\\
\mathscr S_2(N) &\coloneqq - \frac{3\sqrt{3}}{8\sqrt{2}\pi} \sum_{k=1}^N k^{-\frac72} \sum_{\substack{0 \leq h< k \\ \gcd(h,k)=1}}  \zeta_{8k}^{-3h'-8n_\mu h} \\
&\hspace{4cm}\times\sum_{\nu\pmod{3}} \psi_{h,k}(\nu,\mu) \int_{z_1}^{z_2} z^{\frac32} e^{2\pi n_\mu\frac{z}{k^2}} \mathcal E_{1,2\nu,-\frac{h'}{k},\frac38}^*\left(\frac{h'}{k}+\frac{i}{z}\right)dz,\\
\mathscr S_3(N)&\coloneqq - \frac{9\sqrt{3}i}{16\pi^2} \sum_{k=1}^N k^{-\frac72} \sum_{\substack{0\leq h < k \\ \gcd(h,k)=1}}  \zeta_{8k}^{-3h'-8n_\mu h} \\
&\hspace{4cm}\times\sum_{\nu\pmod{3}} \psi_{h,k}(\nu,\mu) \int_{z_1}^{z_2} z^{\frac32} e^{2\pi n_\mu\frac{z}{k^2}} \mathcal E_{2,\nu,-\frac{h'}{k},\frac38}^*\left(\frac{h'}{k}+\frac{i}{z}\right)dz,
\end{align*}
and where the error term $\mathcal E(N)$ satisfies
\begin{equation*}
\mathcal E(N) \ll N^{-\frac32} \log(N)^2.
\end{equation*}
In particular, $\lim_{N \to \infty} \mathcal E(N)  = 0$.

In each of the integrals, we now write
\begin{equation}\label{splitin}
\int_{z_1}^{z_2} = \int_{\mathcal C}-\int_{0}^{z_1} - \int_{z_2}^0,
\end{equation}
where $\mathcal C$ denotes the entire standard circle traversed in a clockwise direction.
Note that when integrated over an arc in the standard circle, on $[0,z_1]$ and $[z_2,0]$ the same bounds hold as for the non-principal parts (note that the length of such arcs is $\ll \frac{k}{N}$). Thus, letting $N\to \infty$, 
\begin{equation*}
a_{3,\mu}(n) = \mathscr S_1+\mathscr S_2+\mathscr S_3,
\end{equation*}
where $\mathscr S_j$ is obtained from $\mathscr S_j(N)$ by only taking the first term in \eqref{splitin} and then letting $N\to \infty$. We next rewrite the integrals over $\mathcal C$ in terms of the Bessel functions. For this, we define, for $m,n>0$, $\ell\in\IR$,
\begin{equation*}
\mathcal I_{n,m,\ell}:= \int_{\mathcal C} z^\ell e^{\frac{2\pi nz}{k^2}+\frac{2\pi m}{z}} dz.
\end{equation*}
We make the change of variables $z\mapsto \frac1z$ and use the following representation for the $I$-Bessel function
\begin{equation*}
I_\kappa(x) = \frac{\left(\frac{x}{2}\right)^\kappa}{2\pi i} \int_{c-i\infty}^{c+i\infty} t^{-\kappa-1} e^{t+\frac{x^2}{4t}} dt
\end{equation*}
with $c>0$, $\re(\kappa)>0$. This yields
\begin{equation*}
\mathcal I_{n,m,\ell}=  -2\pi i\left(k\sqrt{\frac{m}{n}}\right)^{\ell+1}I_{\ell+1}\left(\frac{4\pi \sqrt{mn}}{k}\right).
\end{equation*}
Plugging this into $\mathscr S_j$ gives the statement of the theorem.

\subsection{Proof of Corollary \ref{ascor}}
	To prove Corollary \ref{ascor}, we require the following Bessel function asymptotics $(\ell\in\frac12+\mathbb Z)$, as $x\to\infty$,
	\begin{equation}\label{Bessel}
	I_\ell(x)=\frac{e^x}{\sqrt{2\pi x}} \lp 1 - \frac{4 \ell^2 -1}{8x} 
	+ O \lp \frac{1}{x^2} \rp
	\rp.
	\end{equation}
	 Because of \eqref{Bessel}, the $k=1$ terms give the leading exponential behavior. The corresponding generalized Kloostermann sum is simply
\begin{equation*}
K_1 (\mu,\nu; n, r_1, r_2) = \psi_{0,1}(\nu,\mu) = \frac{1}{\sqrt{3}}\zeta_3^{-2\mu\nu}.
\end{equation*}
We now investigate the asymptotic behavior of the $\mathscr S_j$ seperately
starting with $\mathscr S_1$. Using \eqref{Bessel} gives that, as $n \to \infty$,
	\begin{equation}\label{S1as}
	\mathscr S_1 = \frac{1}{24\sqrt{6}n^\frac32}e^{\pi\sqrt{6n}}
	\lp  1 - \frac{3}{\pi \sqrt{6n}}  + O \lp\frac{1}{n}\rp\rp.
	\end{equation}
Next we estimate $\mathscr S_2$ using the leading term in \eqref{Bessel}
\begin{equation*}
\mathscr S_2 = - \frac{27}{256\sqrt{6} n^{\frac32}}  \lp 1 + O \lp n^{-\frac12} \rp \rp
 \sum_{\nu\pmod{3}}
  \zeta_3^{-2\mu\nu} \int_{-1}^1 \left(1-w^2\right)^{-\frac14} g_{1,\nu}^*(w)  e^{\pi\sqrt{6n\left(1-w^2\right)}}dw.
\end{equation*}
Using the saddle point method and the fact that $g_{1,\nu}^*(0) = \frac{2\sqrt{2}}{\pi} \d_{\nu,0}$ we obtain
\begin{equation*}
\mathscr S_2 = - \frac{81}{32 \pi \lp 6  n \rp^{\frac74}} e^{\pi\sqrt{6n}} \lp 1 + O \lp n^{-\frac12} \rp\rp.
\end{equation*}

Next we approximate
\begin{align*}
\mathscr S_3 =& \frac{9}{512\sqrt{6} n^{\frac32}}  \lp 1 + O \lp n^{-\frac12} \rp \rp
 \sum_{\nu\pmod{3}}  \zeta_3^{-2\mu\nu}  \\ &\qquad \times
 \sum_{\substack{r_1,r_2 \pmod{3k} \\ r_1\equiv r_2+\nu \pmod{3}}}
 \int_{Q(\boldsymbol{w})\leq 1} (1-Q(\boldsymbol{w}))^{-\frac{1}{4}}
  g^*_{1,\boldsymbol{r}}(\boldsymbol{w}) e^{\pi\sqrt{6n(1-Q(\boldsymbol{w}))}}\boldsymbol{dw}.
\end{align*}
Again using the saddle point method and the fact that $g^*_{1,\boldsymbol{r}}(\boldsymbol{0}) = \frac{27}{\pi^2} \d_{\boldsymbol{r},\boldsymbol{0}}$ we obtain
\begin{equation*}
\mathscr S_3 =  \frac{27\sqrt{3}}{256 \pi^2 n^2} e^{\pi\sqrt{6n}} \lp 1 + O \lp n^{-\frac12} \rp\rp.
\end{equation*}
Combining all three terms proves the claim.

\section{Numerical Results on an Example}\label{sec7}
In this section, we give numerical data for the Rademacher expansion of $\a_{3,\mu} (n)$ given in Theorem \ref{main theorem}. Denote the contribution of the first line of $\a_{3,\mu} (n)$ expansion by $\mathcal{A}_1 (N)$, the second line by $\mathcal{A}_2 (N)$, and the third and fourth lines by $\mathcal{A}_3 (N)$ with the sum over $k$ taken from one to $N$ in all cases. In Tables \ref{tab:a30_5} and \ref{tab:a31_5} we take the $n=5$ case as an example and display how $\mathcal{A}_1 (N)+\mathcal{A}_2 (N)+\mathcal{A}_3 (N)$ approaches to $\a_{3,0} (5) = 1512$ and $\a_{3,1} (5) = 40881$, respectively.\footnote{The computation of $\mathcal{A}_1 (N)$, $\mathcal{A}_2 (N)$, and $\mathcal{A}_3 (N)$ requires $O(N)$, $O(N^2)$, and $O(N^3)$ computations involving modified Bessel functions and their integrals, respectively. Also note that the leading term ($N=1$) can be computed in constant time and our discussion on the asymptotic expansion shows that the error is exponentially suppressed as $n$ gets larger.}

\begin{table}[h]
\centering
\begin{tabular}{c | c | c | c }
  & $N=1$ & $N=2$  &   $N=3$ \\
  \hline
$\mathcal{A}_1 (N)$ &
 $21840.0401\ldots$   &  $21843.2723\ldots$   &  $21843.0363\ldots$  \\
$\mathcal{A}_2 (N)$ &
$-32806.5410\ldots$   & $-32811.3140\ldots$   & $-32810.8548\ldots$    \\
$\mathcal{A}_3 (N)$ &
 $12478.4547\ldots$  & $12480.0457\ldots$   &  $12479.8193\ldots$  \\
$\mathcal{A}_1 (N)+\mathcal{A}_2 (N)+\mathcal{A}_3 (N)$ &
$1511.9538\ldots$   &  $1512.0039\ldots$  & $1512.0008\ldots$   \\
\hline 
\bottomrule
\end{tabular}
\caption{Numerical results for $\a_{3,0} (5) = 1512$.}
\label{tab:a30_5}
\end{table}

\begin{table}[h]
\centering
\begin{tabular}{c | c | c | c }
  & $N=1$ & $N=2$ & $N=3$  \\
  \hline
$\mathcal{A}_1 (N)$ &
$221918.638\ldots$  &  $221910.095\ldots$  & $221910.095\ldots$   \\
$\mathcal{A}_2 (N)$ &
 $-255562.432\ldots$  &  $-255548.451\ldots$  & $-255548.537\ldots$   \\
$\mathcal{A}_3 (N)$ &
$74525.064\ldots$   & $74519.364\ldots$   & $74519.440\ldots$   \\
$\mathcal{A}_1 (N)+\mathcal{A}_2 (N)+\mathcal{A}_3 (N)$ &
$40881.270\ldots$   &  $40881.008\ldots$  &  $40880.998\ldots$  \\
\hline 
\bottomrule
\end{tabular}
\caption{Numerical results for $\a_{3,1} (5) = 40881$.}
\label{tab:a31_5}
\end{table}

Our results in Section \ref{sec6} give upper bounds for the error in $\mathcal{A}_1 (N)+\mathcal{A}_2 (N)+\mathcal{A}_3 (N)$ by $O( N^{-\frac{3}{2}} \log (N)^2 )$ for fixed $n$. This should be compared with the $O( N^{-\frac{3}{2}} )$ error that one would have in the case of an ordinary modular form of the same weight. Despite that, our numerical results suggest that $\mathcal{A}_1 (N)+\mathcal{A}_2 (N)+\mathcal{A}_3 (N)$ converges faster than $\mathcal{A}_1 (N)$ due to cancellations between $\mathcal{A}_1 (N)$, $\mathcal{A}_2 (N)$, and $\mathcal{A}_3 (N)$. It would be interesting to go beyond numerical analysis, understand whether this is in fact the case and whether there is another representation of the Fourier coefficients that can make this behavior obvious.

\end{document}